\documentclass[a4paper]{ifacconf}

\usepackage{amsmath, amssymb, amsfonts}
\usepackage{bbm}
\allowdisplaybreaks
\usepackage{graphicx}
\usepackage{subcaption}
\usepackage{tikz}
\usetikzlibrary{arrows.meta}
\usepackage{xcolor}
\usepackage{natbib}

\definecolor{dgreen}{rgb}{0.,0.8,0.}
\newcommand{\green}[1]{{\color{black}#1}}

\newcommand{\qedsymbol}{$\blacksquare$}

\newtheorem{theorem}{Theorem}

\newtheorem{proposition}[theorem]{Proposition}

\newtheorem{assumption}[theorem]{Assumption}
\newtheorem{remark}[theorem]{Remark}

\newenvironment{proof}[1][Proof]{%
  \par\noindent\textbf{#1. }\rmfamily
}{\hfill\qedsymbol\par}
\newcounter{example}
\renewcommand{\theexample}{\arabic{example}}

\newcommand*{\dif}{\mathop{}\!\mathrm{d}}
\newcommand{\reals}{\mathbb{R}}
\newcommand{\naturals}{\mathbb{N}}

\newcommand{\diag}{\mathrm{diag}}

\newcommand{\argmin}{\mathrm{argmin}}
\newcommand{\Sparse}{\mathcal{S}}
\newcommand{\Tr}{\mathrm{tr}}
\newcommand{\Sym}{\mathrm{Sym}}
\newcommand{\col}{\mathrm{col}}

\newcommand{\sbs}[1]{_{\scriptscriptstyle \mathrm{#1}}}

\graphicspath{{figs/}}

\begin{document}
\begin{frontmatter}

\title{Structured Positive-Definite Optimal Control Synthesis of Closed-Loop Recommendation Systems over Social Networks%
\thanksref{footnoteinfo}}

\thanks[footnoteinfo]{This work has been supported in part by ANR projects Feeding Bias (ANR-22-CE380017-01) and MIAI Cluster BEAR (ANR-23-IACL-0006).}

\author[First,Second]{Simone Mariano}
\author[First]{Paolo Frasca}

\address[First]{Univ.\ Grenoble Alpes, CNRS, Inria, Grenoble INP, GIPSA-lab, 38000 Grenoble, France (e-mail: simone.mariano@grenoble-inp.fr, paolo.frasca@gipsa-lab.fr).}
\address[Second]{Univ.\ Grenoble Alpes, CNRS, Sciences Po Grenoble-UGA, Pacte, 38000 Grenoble, France.}

\begin{abstract}
We study the design of feedback recommendation policies for networked multi-topic opinion dynamics. The design of recommendations is formulated as an infinite-horizon linear-quadratic optimal control problem that favors suggestions aligned with each agent's current opinion, thereby using opinion alignment as a proxy for engagement. At the same time, the performance index penalizes polarization, deviation from the uncontrolled equilibrium of the opinion dynamics, and from the current agents' opinion, recommendation effort, and favors coherence among neighbors' recommendations. An agent-wise representation exposes the interconnection structure of the social network and enables structured controller and certificate synthesis. Under the assumption that the stage cost of the performance index is strictly positive definite, the affine optimal control problem separates into a strictly convex steady-state optimization, which determines the optimal equilibrium and the affine controller offset,  and an LQR problem, which yields the static state-feedback gain. The resulting centralized Riccati controller provides a performance benchmark, while structured gains are obtained from dissipativity-based and $\mathcal H_2$-based LMI surrogate formulations. \green{Possibly overlapping certificate clusters yield scalable local sufficient conditions for the structured gain design, while a separate steady-state cluster decomposition provides an exact consensus reformulation of the steady-state optimization.} Numerical results on a social network illustrate the closed-loop behavior and the trade-off between performance and controller locality.
\end{abstract}

\begin{keyword}
Social networks and opinion dynamics; Control of networks; Applications of optimal control; Recommendation systems
\end{keyword}

\end{frontmatter}

%\tableofcontents

\section{Introduction}

Recommendation systems increasingly shape how users encounter information, products, and social media content. When recommendations are optimized mainly for short-term engagement, the feedback loop between platform decisions and user behavior may amplify confirmation bias, reduce content diversity, contribute to opinion polarization towards extreme opinions, and foster echo chambers~\citep{gausen2022abm,huszar2022algorithmic,bail2018opposing}. The results in~\citep{FrascaRecommendation2022}, which formalized this mechanism in a closed-loop single-user model, provide both a baseline mechanism and a benchmark phenomenon, while \citep{jiang2019degenerate} and \citep{chaney2018confounding} show, respectively, how without sufficient exploration, opinions drift to extremes, and how learning on self-influenced data may reduce diversity and degenerate social platforms.

The work \citep{dean2024accounting} surveyed the burgeoning literature documenting the shortcomings of favoring short-term engagement, and argues that most recommendation pipelines are designed without an explicit model of how users and algorithms shape one another and highlights several structural issues as (i) memoryless architectures \citep{covington2016youtube} (ii) simplified or omitted user dynamics and creator adaptation (iii) and optimization aggressively oriented toward engagement \citep{chen2019topk,immorlica2024clickbait}.  Mitigation is typically an afterthought, and performed favoring myopic, symptom-level fixes, which use a posteriori logged data \citep{sinha2016deconvolving,chaney2018confounding}. This, in turn, degrades platform performance even more and induces user- and society-level harms, including shifts in exposure composition, reduced content diversity, and polarization  \citep{mansoury2020feedback,nguyen2014exploring}.

Recommendation system design should therefore not be viewed only as a prediction or ranking algorithm, but also regarded as a feedback-control problem in which recommendations influence the user state on which future recommendations are based. The user-recommender feedback is studied in the present work by extending the results introduced by \citep{mariano_frasca_pathological}. We keep the modeling structure proposed therein and thus consider a networked multi-topic opinion dynamics model inspired by \citep{FrascaRecommendation2022,friedkin2016network,ye2020continuous}, where recommendations enter the user's model as control inputs. These recommendations are chosen to minimize a performance index, which rewards recommendations aligned with the current opinion of each agent, reflecting the tendency of users to interact with opinion-congruent content. At the same time, it penalizes polarization, deviation from the uncontrolled equilibrium, recommendation effort, disagreement between neighboring recommendations, and recommendation mismatch.

The quadratic reward associated with the engagement proxies makes well-posedness a nontrivial issue that is studied in \citep{mariano_frasca_companion}. The present paper instead concentrates on constructive controller synthesis in the strictly positive-definite, well-posed regime. Under this condition, the affine infinite-horizon problem separates into two standard optimization problems. A strictly convex static quadratic program determines the optimal steady-state pair, while an LQR problem determines the static state-feedback gain. The corresponding affine term in the controller is then fixed by the requirement that the closed-loop system converge to the optimal steady state.

The centralized LQR gain is generally dense and may require each recommendation to depend on the opinions of the entire network. To account for limited information availability, we consider affine feedback laws whose static gains satisfy a prescribed block-sparsity pattern. The resulting structured optimal control problem is nonconvex. We therefore develop two convex surrogate formulations based, respectively, on a dissipativity inequality and an $\mathcal H_2$ performance criterion.

We also develop \green{two-layer formulations} aimed at improving scalability. For the structured gain design, the centralized LMIs are decomposed into local matrix inequalities whose satisfaction is sufficient for the centralized certificate. This construction preserves the prescribed information pattern but may introduce conservatism. The steady-state quadratic program is instead decomposed exactly over a set of agents associated with the sparsity pattern of its hessian. Consensus constraints on the shared variables recover the centralized optimal steady state. Finally, the numerical illustrations show the complete design procedure on a social network with an emerging community structure. The centralized Riccati controller and the two structured controllers are compared in terms of their closed-loop trajectories, cumulative performance, individual cost components, and agent-wise information patterns. The results illustrate the trade-off between the locality of the feedback policy and its performance.

The closed-loop interaction between recommendation policies and evolving user opinions has been studied from several complementary perspectives. A broader control-theoretic perspective is developed in \citep{depasquale2026recommender}, where recommender systems are modeled as coupled dynamical systems involving users, creators, and the recommendation algorithm, and their fairness is analyzed according to their long-term closed-loop effects. The work in \citep{FrascaRecommendation2022} introduces a closed-loop model relating personalized recommendations, confirmation bias, and the opinion evolution of a single user. A networked extension is considered in \citep{sprenger2024control}, where scalar Friedkin--Johnsen dynamics are influenced through recommendation policies designed using model-free control and constrained model predictive control. More recently, \citep{chandrasekaran2026network} develops a network-aware recommender based on online data-driven feedback optimization. The proposed policy balances user engagement and polarization mitigation while relying on online click data and provides stability and optimality guarantees for a network governed by Friedkin--Johnsen dynamics. The present work differs from these contributions in both the model and the controller synthesis problem. More specifically, we study the synthesis of structured affine feedback policies for networked multi-topic opinion dynamics under a strictly positive-definite infinite-horizon optimal control formulation.

Other related approaches act on different variables of the opinion dynamics model. In particular, \citep{kuhne2025optimizing} optimizes the weights of the social-interaction network through a scalable hypergradient method, with the aim of reducing polarization and disagreement under Friedkin--Johnsen dynamics. Thus, the designed intervention modifies the influence network, rather than synthesizing an agent-wise recommendation input as a structured state-feedback law. Related works \citep{carroll2022preference,breschi2024optimal} also study recommendation-induced preference shifts or personalized policies, but do not address the structured infinite-horizon control problem considered here.

A preliminary version of this work and \citep{mariano_frasca_companion} appeared in our contribution~\citep{mariano_frasca_pathological}. The present manuscript complements that contribution, and that of  \citep{mariano_frasca_companion}, by developing the complete affine steady-state and LQR decomposition, the structured dissipativity-based and $\mathcal H_2$-based formulations, the certificate cluster synthesis conditions, and the exact steady-state cluster formulation of the steady-state problem.

The remainder of the paper is organized as follows. Section~\ref{sec:pi} introduces the performance index and the control problem. Section~\ref{sec:model} presents the opinion dynamics, the agent-wise representation, and the reduced affine LQ formulation, while Section~\ref{subsec:FE_PD} derives the centralized solution and formulates the exact structured problem. The dissipativity-based and $\mathcal H_2$-based structured synthesis methods are presented in Sections~\ref{subsec:affine_offsets_LMI_eval_dis} and~\ref{subsec:affine_offsets_LMI_eval_H2}, with the certificate cluster formulations for the gain design and the steady-state cluster formulation for the steady-state optimization being developed in Section~\ref{subsec:local_sdp_final}. Section~\ref{sec:numerical-experiments} presents the numerical study, while Section~\ref{sec:disc} ends the paper with conclusions and discussions on the presented results.

\noindent\textbf{Notation:}
The sets of real and natural numbers are denoted by $\reals$ and $\naturals$, respectively. For $n\in\naturals$, let $[n]:=\{1,\dots,n\}$ and let $\mathbbm{1}_n$ denote the $n$-dimensional vector of ones. The symbols $I_n$ and $O_n$ denote the identity and zero matrices of appropriate dimensions. For matrices $A$ and $B$ of equal dimensions, $A\odot B$ denotes their Hadamard product. For a square matrix $M$, $\sigma(M)$ denotes its spectrum, $\Tr(M)$ its trace, and $\Sym(M):=M+M^\top$. For symmetric matrices, the symbols $\succ0$, $\succeq0$, $\prec0$, and $\preceq0$ denote the usual Loewner order. The sets of real diagonal, diagonal positive-definite, and diagonal positive-semidefinite matrices of dimension $n$ are denoted by $\mathbb D^n$, $\mathbb D_{\succ0}^n$, and $\mathbb D_{\succeq0}^n$, respectively. The sets of real symmetric and symmetric positive-definite matrices of dimension $n$ are denoted by $\mathbb S^n$ and
$\mathbb S_{\succ0}^n$, respectively. For $W_{(\cdot)} = \diag\left( w_{1,(\cdot)},\dots,w_{nm,(\cdot)} \right)$, we define $\lambda_{M,(\cdot)} := \max_{i\in[nm]}w_{i,(\cdot)}, \qquad \lambda_{m,(\cdot)} := \min_{i\in[nm]}w_{i,(\cdot)}$. For a complex number $\lambda$, $\Re(\lambda)$ denotes its real part. The operators $\col(\cdot)$ and $\diag(\cdot)$ denote column stacking and block-diagonal concatenation, respectively.

\section{Performance index and design goal}
\label{sec:pi}

Consider $n\in\naturals$ agents holding opinions on $m\in\naturals$ topics. The agents are connected through a directed weighted graph $\mathcal G=(\mathcal V,\mathcal E)$ with Laplacian $L=\Delta-\Gamma$, where $\Gamma\in\reals^{n\times n}$ is the adjacency matrix and $\Delta=\diag(\Gamma\mathbbm{1}_n)$. Let $z(t)\in\reals^{nm}$ and $w(t)\in\reals^{nm}$ denote, respectively, the opinion vector and the recommendation input, stacked agent-wise as $z=(z_1,\dots,z_n)$, and $w=(w_1,\dots,w_n)$, with $z_i,w_i\in\reals^m$. Thus $z_{(i-1)m+k}$ and $w_{(i-1)m+k}$ denote the opinion and recommendation input of agent $i$ on topic $k$. The performance index follows the architecture studied in~\citep{mariano_frasca_companion} and is defined as
\begin{align}
J(z,w) &:= \int_0^\infty \Big( -J\sbs{EN} +J\sbs{P} +J\sbs{D} +J\sbs{EX} +J\sbs{F} +J\sbs{C} -J_\circ \Big)\,\dif t \nonumber \\
&= \int_0^\infty \bigl(\ell(z(t),w(t))-J_\circ\bigr)\,\dif t, \label{eq:J}
\end{align}
where the individual terms $J\sbs{P}:= z^\top W\sbs{P}z$, $J\sbs{D}:=(z-z_{\mathrm{eq}})^\top W\sbs{D}(z-z_{\mathrm{eq}})$, $J\sbs{EX}:= w^\top W\sbs{EX}w$, and $J\sbs{C}:= (z-w)^\top W\sbs{C}(z-w)$ respectively penalize polarization, deviation from the uncontrolled equilibrium, recommendation effort, and recommendation mismatch, while the term $J\sbs{EN} := z^\top W\sbs{EN}w$ rewards the recommendation and opinion alignment which is used as a proxy for engagement \citep{FrascaRecommendation2022}. The weights $W\sbs{EN},W\sbs{P},W\sbs{D}\in\mathbb D_{\succeq0}^{nm}$, $W\sbs{EX},W\sbs{C}\in\mathbb D_{\succ0}^{nm}$, and $\alpha_{\mathrm F}\ge0$ dictate the priority given to each aspect of the optimization, while the scalar $J_\circ$ subtracts the steady-state contribution of the stage cost, so that the infinite-horizon criterion is evaluated relative to the limit regime. Finally, let $\mathcal G_b$ be the symmetrized graph associated with $\Gamma_b:=(\Gamma+\Gamma^\top)/2$, and let $\Pi_{\mathcal{F}}=\{\mathcal F_1,\dots,\mathcal F_r\}$, with $r\in [n]$, be the \emph{exposure partition} of $\mathcal V$ defining the \emph{exposure groups}. We assume that the subgraph of $\mathcal G_b$ induced by each $\mathcal F_\ell$ is connected. For each $\ell\in[r]$, let $L_{b,\ell}$ denote the Laplacian of this induced subgraph. After reordering the agents so that vertices belonging to the same exposure group are contiguous, define $L_{\mathcal F}:=\diag(L_{b,1},\dots,L_{b,r})$ and set $L_u:=L_{\mathcal F}\otimes I_m$. Thus $J\sbs{F}:=\alpha_{\mathrm F} w^\top L_u w$ regularizes recommendation discrepancies within the selected sets of agents.

Hence, the design problem is
\begin{align}
\min_{w}\quad &J = \int_0^\infty \bigl(\ell(z(t),w(t))-J_\circ\bigr)\,\dif t, \nonumber \\
\mathrm{s.t.}\quad &\dot z=f(z,w), \qquad z(0)=z_0 . \label{eq:prob_full}
\end{align}
The role of the next section is to specify $f$ and rewrite~\eqref{eq:prob_full} as an affine LQ problem.

\section{Opinion dynamics model and reduced affine LQ formulation}
\label{sec:model}

Let $Z\in\reals^{m\times n}$ be the opinion matrix, where $Z_{ki}(t)$ is the opinion of agent $i\in[n]$ on topic $k\in[m]$. The continuous-time opinion dynamics are
\begin{equation}
\dot Z = (C-I_m)Z - Z(L+A_a)^\top + Z_\circ A_a + W-Z . \label{eq:model_matrix_form}
\end{equation}
Here $L$ is the social-graph Laplacian, $C\in\reals^{m\times m}$ is the inter-topic coupling matrix, $A_a\in\mathbb D_{\succ0}^n$ is the anchoring matrix, $Z_\circ\in\reals^{m\times n}$ collects the agents' inner beliefs, and $W\in\reals^{m\times n}$ is the recommendation input. The term $W-Z$ means that recommendations act in relative form, so that a recommendation already aligned with the current opinion does not create an additional change in the opinion.

\begin{assumption}
\label{ass:C}
Matrix $C$ is such that $c_{ii}\ge0$ for all $i\in[m]$, $|c_{ij}|\le1$ for all $i,j\in[m]$, and, given $A:=C-I_m$, zero is a semisimple eigenvalue of $A$ with multiplicity $p\ge1$, while every $\lambda\in\sigma(A)$ such that $\lambda\neq0$ satisfies $\Re(\lambda)<0$.
\end{assumption}

Assumption~\ref{ass:C} is consistent with \citep[Ass.~1]{ye2020continuous} and prevents instability of the uncontrolled topic dynamics. Equation~\eqref{eq:model_matrix_form} extends the models of~\citep{friedkin2016network,ye2020continuous} by adding a recommendation input.

Using the agent-wise vectorization introduced in Section~\ref{sec:pi}, the dynamics become
\begin{align*}
\dot z &= \Big[ I_n\otimes(C-I_m) - (L+A_a)\otimes I_m \Big]z \nonumber \\
&+ (A_a\otimes I_m)z_\circ + w-z =:A_{uz}z+d_z+w-z, \nonumber
\end{align*}
where
\begin{equation*}
A_{uz}:= I_n\otimes(C-I_m) - (L+A_a)\otimes I_m, \qquad d_z:=(A_a\otimes I_m)z_\circ . \label{eq:Az_dz_defs}
\end{equation*}
Equivalently,
\begin{equation}
\dot z=A_{cz}z+d_z+w, \qquad A_{cz}:=A_{uz}-I_{nm}. \label{eq:agent_controlled_affine}
\end{equation}

The factor $I_n\otimes(C-I_m)$ acts locally on the topics of each agent, whereas $L\otimes I_m$ couples different agents along the social graph while acting identically on all topics. Defining
\begin{equation*}
L_T:=I_m-C, \label{eq:topic_laplacian}
\end{equation*}
the uncontrolled drift can be written as
\begin{equation*}
A_{uz} = -\Big( I_n\otimes L_T + L\otimes I_m + A_a\otimes I_m \Big).
\end{equation*}
Thus $L_T$ is Laplacian-like in a spectral sense: by Assumption~\ref{ass:C}, it has a semisimple zero eigenvalue and all remaining eigenvalues in the open right half-plane. If, in addition, $C\mathbbm{1}_m=\mathbbm{1}_m$ and $C_{ij}\ge0$, then $L_T$ is a standard graph Laplacian on the topic graph. In this case, $-A_{uz}$ can be interpreted as a Cartesian-product-type diffusion between the social graph and the topic graph, augmented by the anchoring term $A_a\otimes I_m$. Under Assumption~\ref{ass:C}, the matrices, th matrices $A_{uz}$ and $A_{cz}$ are Hurwitz; see \citep[Prop. 2]{mariano_frasca_companion}. Consequently, the uncontrolled system
\begin{equation*}
\dot z=A_{uz}z+d_z
\end{equation*}
admits the unique globally exponentially stable equilibrium
\begin{equation*}
z_{\mathrm{eq}}=-A_{uz}^{-1}d_z .
\end{equation*}

We now turn to the LQ reformulation of~\eqref{eq:prob_full} by combining the performance index in \eqref{eq:J} with the agent-wise dynamics in \eqref{eq:agent_controlled_affine}. This step produces the affine LQ formulation used in the synthesis sections below.

Expanding the deviation penalty gives
\begin{equation*}
J\sbs{D} = z^\top W\sbs{D}z - 2z_{\mathrm{eq}}^\top W\sbs{D}z + \kappa_{\mathrm D}, \qquad \kappa_{\mathrm D} := z_{\mathrm{eq}}^\top W\sbs{D}z_{\mathrm{eq}}.
\end{equation*}
Since the criterion depends only on the difference $\ell-J_\circ$, replacing $\ell$ by $\ell-\kappa_{\mathrm D}$ and $J_\circ$ by $J_\circ-\kappa_{\mathrm D}$ leaves the optimization problem unchanged. We henceforth use the symbols $\ell$ and $J_\circ$ for these normalized quantities for the sake of simplicity. The normalized stage cost is therefore
\begin{equation*}
\ell(z,w) = z^\top Q z + 2\,z^\top N w + w^\top R w + 2\,c^\top z,
\end{equation*}
with
\begin{align*}
Q&=Q^{\top}=W\sbs{D}+W\sbs{P}+W\sbs{C} \succ 0, \\
N&=N^{\top}=-\tfrac{1}{2}\,W\sbs{EN}-W\sbs{C} \preceq 0, \\
R&=R^{\top}=W\sbs{EX}+\alpha\sbs{F}L_u +W\sbs{C}\succ 0,
\end{align*}
and $c=-W\sbs{D}z_{\mathrm{eq}}$.

Defining $s:=w+R^{-1}N z$, the control problem \eqref{eq:prob_full} is equivalent to
\begin{align}
\min_{s}\quad & \widetilde J=\int_{0}^{\infty} \bigl(\widetilde \ell(z(t),s(t))-J_\circ\bigr)\,\dif t, \nonumber \\
\mathrm{s.t.}\quad & \dot z = \widetilde A z + s + d_z,\quad z(0)=z_0, \label{eq:prob_full_alt}
\end{align}
with
\begin{equation*}
 \widetilde A:=A_{cz} - R^{-1}N, \quad \widetilde Q:=Q-N R^{-1}N,
\end{equation*}
and
\begin{align*}
\widetilde\ell(z,s)&=z^\top \widetilde Q z + s^\top R s + 2\,c^\top z, \\
w(z)&=s(z)-R^{-1}N z.
\end{align*}

The definiteness of the reduced quadratic part
\begin{equation*}
\widetilde\ell_{sq}(z,s) := z^\top \widetilde Q z+s^\top R s \label{eq:stage_cost_qp}
\end{equation*}
determines whether the reduced problem belongs to the classical strict LQ regime. The characterization of this regime is given in the companion paper~\citep{mariano_frasca_companion}. In particular, \citep[Lem.~3]{mariano_frasca_companion} provides a simple spectral sufficient condition, expressed directly in terms of the weights of the performance index, while \citep[Cor.~4]{mariano_frasca_companion} gives a necessary and sufficient condition when $Q$, $N$, and $R$ are simultaneously orthogonally diagonalizable. The latter case includes, for instance, jointly diagonalizable designs, and in particular the diagonal case obtained when the graph-regularization term is absent.
\begin{assumption}
\label{ass:strict_pd_reduced_cost}
The performance-index weights are chosen so that $\widetilde Q\succ0$.
\end{assumption}
\vspace{-0.9\baselineskip}
Since the input matrix in~\eqref{eq:prob_full_alt} is $I_{nm}$, the pair $(\widetilde A,I_{nm})$ is controllable. Therefore, under Assumption~\ref{ass:strict_pd_reduced_cost} and since $R \succ 0$, the reduced problem falls within the classical strictly positive-definite affine LQ setting. The affine terms $d_z$ and $c$ determine the optimal steady regime, while the transient deviation from that regime is governed by a standard stabilizing Riccati equation. Thus, the recommendation policy can be constructed as a stabilizing affine state feedback, as shown in the following sections.
\section{Centralized and structured affine LQ synthesis}
\label{subsec:FE_PD}

Under Assumption~\ref{ass:strict_pd_reduced_cost}, the reduced problem~\eqref{eq:prob_full_alt} is a strictly positive-definite affine LQ problem. The affine drift $d_z$ and the linear term $2 c^\top z$ prevent a direct application of the homogeneous LQR formula in the original coordinates. Hence, we shift the problem around the appropriate steady-state regime, which is also the point at which the offset $J_\circ$ is determined. Once this steady-state shift is performed, the gain synthesis reduces to the computation of the unique stabilizing solution of an algebraic Riccati equation. This construction provides the reference solution for the structured synthesis methods introduced later and makes explicit how the chosen design weights shape the optimal recommendation law and the resulting closed-loop behavior.

\subsection{Centralized affine LQ benchmark}
\label{subsec:FE_PD_agentwise_formulation}

Consider the control problem in \eqref{eq:prob_full_alt}, namely
\begin{align*}
\min_{s}\quad &\widetilde J = \int_{0}^{\infty} \bigl( \widetilde\ell(z(t),s(t))-J_\circ \bigr)\,\dif t, \\
\mathrm{s.t.}\quad &\dot z=\widetilde A z+s+d_z, \qquad z(0)=z_0.
\end{align*}
Following \citep{AmritRawlingsAngeli2011}, we can rewrite \eqref{eq:prob_full_alt} as
\begin{align}
\min_{s}\quad &\widetilde J := \int_{0}^{\infty} \bigl( \widetilde\ell(z(t),s(t)) - \widetilde\ell(z^\star,s^\star) \bigr)\,\dif t \nonumber \\
\mathrm{s.t.}\quad &\dot z=\widetilde A z+d_z+s, \qquad z(0)=z_0. \label{eq:agentwise_equiv_lq_inc}
\end{align}
with $J_\circ=\widetilde \ell(z^\star,s^\star)$ under the normalized convention introduced above, where $(z^\star,s^\star)$ is the unique minimizer of the steady-state program
\begin{align}
\label{eq:agentwise_equiv_lq_inc_min} \min_{z,s}\;&\widetilde \ell(z,s) \quad\text{s.t.}\quad 0=\widetilde A z + d_z + s.
\end{align}
Since $\widetilde Q\succ0$ and $R\succ0$, the objective is strictly convex and the constraint is affine. Hence the steady-state problem admits a unique optimizer $(z^\star,s^\star)$. Moreover, the equality constraint has full row rank because the input appears through the identity matrix. Therefore, the KKT conditions are necessary and sufficient, and there exists a unique multiplier $\lambda^\star$ such that the primal-dual triple $(z^\star,s^\star,\lambda^\star)$ satisfies the KKT conditions \citep{BoydVandenberghe2004}
\begin{align}
0 &= \widetilde A z^\star + d_z + s^\star, \nonumber \\
0 &= 2\widetilde Q z^\star + 2c + \widetilde A^\top\lambda^\star, \nonumber \\
0 &= 2R s^\star + \lambda^\star. \label{eq:KKT_conds}
\end{align}

Define the deviation variables
\begin{equation*}
\bar z:=z-z^\star,\qquad \bar s:=s-s^\star.
\end{equation*}
Since
\begin{equation*}
\widetilde \ell(z,s)=z^\top \widetilde Q z+s^\top R s+2c^\top z,
\end{equation*}
a direct expansion of the incremental integrand gives
\begin{align*}
\widetilde \ell(z,s)&-\widetilde \ell(z^\star,s^\star) = (z^\star+\bar z)^\top \widetilde Q (z^\star+\bar z) -(z^\star)^\top \widetilde Q z^\star \\
&\qquad \quad + (s^\star+\bar s)^\top R (s^\star+\bar s) -(s^\star)^\top R s^\star +2c^\top \bar z \\
&\qquad \quad = \bar z^\top \widetilde Q \bar z +\bar s^\top R \bar s +2\bar z^\top(\widetilde Q z^\star+c) +2\bar s^\top R s^\star .
\end{align*}
Using the KKT second and third conditions in \eqref{eq:KKT_conds}, we obtain
\begin{equation*}
2(\widetilde Q z^\star+c)=-\widetilde A^\top\lambda^\star, \qquad 2Rs^\star=-\lambda^\star,
\end{equation*}
and therefore
\begin{equation*}
\widetilde \ell(z,s)-\widetilde \ell(z^\star,s^\star) = \bar z^\top \widetilde Q \bar z +\bar s^\top R \bar s -\lambda^{\star\top}(\widetilde A\bar z+\bar s).
\end{equation*}
Since the deviation dynamics satisfy
\begin{equation*}
\dot{\bar z}=\widetilde A\bar z+\bar s,
\end{equation*}
it follows that
\begin{equation*}
\widetilde \ell(z,s)-\widetilde \ell(z^\star,s^\star) = \bar z^\top \widetilde Q \bar z +\bar s^\top R \bar s -\lambda^{\star\top}\dot{\bar z}.
\end{equation*}

Hence, integrating over $[0,T]$, with $T>0$, yields
\begin{align} \nonumber
&\int_{0}^{T}\Big(\widetilde \ell(z(t),s(t))-\widetilde \ell(z^\star,s^\star)\Big)\,\dif t
\\=
&\int_{0}^{T}\Big(\bar z^\top \widetilde Q\bar z + \bar s^\top R\bar s\Big)\,\dif t \nonumber\\
&\quad +(\lambda^\star)^\top \bar z(0)-(\lambda^\star)^\top \bar z(T).
\label{eq:Jinc_split_finite}
\end{align}

In this setting, and in line with the objective of the manuscript, we restrict attention to admissible inputs $s(\cdot)$ for which the corresponding deviation trajectory $\bar z(t):=z(t)-z^\star$ converges to zero as $t\to\infty$. On this class of trajectories, passing to the limit $T\to\infty$ in \eqref{eq:Jinc_split_finite} gives
\begin{equation*}
 \widetilde J = \int_{0}^{\infty}\Big(\bar z^\top \widetilde Q\bar z + \bar s^\top R\bar s\Big)\,\dif t +(\lambda^\star)^\top \bar z(0),
\end{equation*}
where the last term is constant with respect to the minimization.

Therefore, minimizing \eqref{eq:agentwise_equiv_lq_inc} over this admissible class is equivalent, in the sense of yielding the same optimal input, to the strict LQR
\begin{align}
\min_{\bar s(\cdot)}\;&\int_{0}^{\infty}\Big(\bar z(t)^\top \widetilde Q \bar z(t)+\bar s(t)^\top R \bar s(t)\Big)\,\dif t \nonumber \\
\text{s.t. }\;&\dot{\bar z}=\widetilde A\bar z+\bar s,\qquad \bar z(0)=z(0)-z^\star. \label{eq:std_lqr_bar2}
\end{align}

Let $P\succeq0$ be the stabilizing solution of the algebraic Riccati equation associated with $(\widetilde A,I_{nm},\widetilde Q,R)$. Then the optimal deviation input is
\begin{equation*}
\bar s(t)=-K\bar z(t),\qquad K=R^{-1}P,
\end{equation*}
and the corresponding optimal inputs are
\begin{align*}
s(t)=s^\star-K(z-z^\star), \quad w(z)=s^\star-K(z-z^\star)-R^{-1}Nz.
\end{align*}
The solution to the centralized unstructured problem is the exact benchmark in the strictly positive-definite regime. In general, however, the Riccati gain $K$ is dense, and the resulting recommendation law requires global state information. This is incompatible with locality, scalability, and limited-information requirements in large networks. We therefore turn to structured affine feedback laws whose gains are constrained to satisfy prescribed sparsity patterns.

\subsection{Structured affine feedback: sparsity constraint and exact (nonconvex) formulation}
\label{subsec:structured_affine}

The generally dense Riccati-based solution associated with \eqref{eq:std_lqr_bar2} is unsuitable for large-scale implementations in which only local information is available. To obtain a more realistic recommendation architecture, we constrain the feedback gain to satisfy a prescribed sparsity pattern compatible with the interconnection structure of the dynamics. In this way, the unconstrained optimal synthesis problem is replaced by a structured one.

Given the binary mask
$\mathcal M\in\{0,1\}^{nm\times nm}$ chosen to encode the desired interaction structure and typically aligned with the sparsity pattern of the dynamics, define the structured set
\begin{equation*}
\Sparse(\mathcal M)\!:=\!\Big\{K\!\in\!\reals^{nm\times nm}\!:\  (\mathbbm{1}_{nm\times nm}\!-\!\mathcal M)\!\odot\! K\!=\!0\Big\}.
\end{equation*}
We restrict to affine state feedback laws of the form
\begin{equation*}
s(z)=-K_s z-k_s,
\quad K_s\in\Sparse(\mathcal M),\; k_s\in\reals^{nm},
\end{equation*}
and require the closed-loop matrix
\begin{equation}\label{eq:structured_stab}
A_{\mathrm{cl}}(K_s):=\widetilde A-K_s,
\end{equation}
to be Hurwitz. Accordingly, $(K_s,k_s)$ denotes a generic admissible structured affine controller compatible with the mask $\mathcal M$.

Notice that, for a generic Hurwitz $A_{\mathrm{cl}}(K_s)$, the closed loop generally converges to an equilibrium
$(z_\infty,s_\infty)$ that depends on $(K_s,k_s)$ and does not necessarily coincide with $(z^\star,s^\star)$ stemming from \eqref{eq:agentwise_equiv_lq_inc_min}.
As a consequence, to obtain a finite infinite-horizon performance criterion under structured feedback, we measure excess cost
relative to the induced equilibrium as
\begin{equation*}
\widetilde J(K_s,k_s)
\!:=\!\int_0^\infty\!\Big(\widetilde \ell(z(t),s(t))- \widetilde \ell(z_\infty,s_\infty)\Big)dt.
\end{equation*}
Under \eqref{eq:structured_stab}, the equilibrium is uniquely defined by
\begin{align}\label{eq:structured_equilibrium}
0&=(\widetilde A-K_s)z_\infty+d_z-k_s, \nonumber\\
s_\infty&=-K_s z_\infty-k_s.
\end{align}

Define deviations $\tilde z:=z-z_\infty$ and $\tilde s:=s-s_\infty$. Then
\begin{equation*}
\dot{\tilde z}=(\widetilde A-K_s)\tilde z,\quad \tilde s=-K_s\tilde z,
\end{equation*}
and expanding $\widetilde \ell(z,s)-\widetilde \ell(z_\infty,s_\infty)$ yields
\begin{align*}
&\widetilde \ell(z,s)-\widetilde \ell(z_\infty,s_\infty) =
\tilde z^\top\!\big(\widetilde  Q+K_s^\top R K_s\big)\tilde z
+2\,\tilde z^\top g(K_s,k_s),
\end{align*}
with
\begin{equation*}
g(K_s,k_s):=\widetilde  Q z_\infty+c - K_s^\top R\, s_\infty .
\end{equation*}

Under \eqref{eq:structured_stab}, the deviation satisfies $\tilde z(t)=e^{(\widetilde A-K_s)t}\tilde z_0$, where $\tilde z_0:=z(0)-z_\infty$, so
\begin{align*}
\widetilde  J(K_s,k_s)&=\int_0^\infty \tilde z(t)^\top M(K_s)\tilde z(t)\,\dif t \\
&\quad+   2\int_0^\infty \tilde z(t)^\top g(K_s,k_s)\,\dif t,
\end{align*}
with $M(K_s):=\widetilde  Q+K_s^\top R K_s$.
By standard Lyapunov integral identities \citep[Ch.~3]{AndersonMoore1989}, the two integrals admit the representation
\begin{equation*}
\widetilde J (K_s,k_s)
=
\tilde z_0^\top S(K_s)\tilde z_0 + 2\,q(K_s,k_s)^\top \tilde z_0,
\end{equation*}
where $S(K_s)=S(K_s)^\top\succ 0$ is the unique solution of
\begin{align*}
(\widetilde A\!-\!K_s)^\top S &+ S(\widetilde A\!-\!K_s) \nonumber\\
&+ (\widetilde  Q+K_s^\top R K_s)=0,
\end{align*}
and $q(K_s,k_s)\in\reals^{nm}$ is the unique solution of
\begin{equation*}
(\widetilde A-K_s)^\top q + g(K_s,k_s)=0.
\end{equation*}
Consequently, the structured synthesis problem is posed, for a given initial condition $z(0)$, as
\begin{equation}\label{eq:structured_exact_opt}
\begin{aligned}
(K_s^\star,k_s^\star)\in \argmin_{K_s,k_s}\quad & \widetilde J (K_s,k_s)\\
\text{s.t.}\quad & K_s\in\Sparse(\mathcal M),\\
& k_s\in\reals^{nm},\\
& \widetilde A-K_s\ \text{Hurwitz}.
\end{aligned}
\end{equation}

Problem \eqref{eq:structured_exact_opt} is generally nonconvex and bilinear \citep{LinFardadJovanovic2011,RotkowitzLall2006,TokerOzbayACC1995,fardad2014design}. Moreover, the affine term depends on the closed-loop
equilibrium through \eqref{eq:structured_equilibrium}, which couples $(K_s,k_s)$.
These features make direct synthesis difficult at scale when $\mathcal M$ encodes locality.

This motivates replacing \eqref{eq:structured_exact_opt} by tractable surrogate problems expressed as LMIs. Lyapunov inequalities can certify stability and upper-bound the excess cost, and the LMI decision variables can be constrained to satisfy sparsity patterns compatible with $\mathcal M$. When the certificate itself is chosen to be structured, the feasibility and performance guarantees can be verified from local information, providing scalable, agent-wise certificates.

\section{Structured LMI synthesis and evaluation: a dissipativity approach}
\label{subsec:affine_offsets_LMI_eval_dis}

We now derive a tractable sufficient method for computing the structured static gain $K_s$. The exact problem \eqref{eq:structured_exact_opt} jointly optimizes $(K_s,k_s)$ and is generally nonconvex under the prescribed sparsity constraint. We therefore adopt a sequential design procedure. First, in the \emph{structured gain layer}, we synthesize $K_s$ by considering the homogeneous deviation dynamics and the quadratic transient contribution associated with $\widetilde Q+K_s^\top R K_s$. A dissipativity inequality is used to certify closed-loop stability and provide an upper bound on this contribution.

The optimal steady-state pair $(z^\star,s^\star)$ is determined separately from the steady-state optimization problem, in the \emph{steady-state layer}. Once the structured gain $K_s$ and the optimal steady-state pair have been computed, the affine offset $k_s$ is recovered so that the closed loop converges to $(z^\star,s^\star)$. With
\begin{equation*}
A_{\mathrm{cl}}:=\widetilde A-K_s,\qquad \tilde z:=z-z^\star,\qquad \tilde s:=s-s^\star,
\end{equation*}
the deviation dynamics are
\begin{equation}
\label{eq:incremental_homogeneous}
\dot{\tilde z}=A_{\mathrm{cl}}\tilde z,\qquad \tilde s=-K_s\tilde z.
\end{equation}
For an arbitrary initial deviation $\tilde z_0$, the corresponding quadratic transient cost is
\begin{equation*}
J_{\mathrm{tr}}(\tilde z_0;K_s):=\int_0^\infty\left(\tilde z(t)^\top\widetilde Q\tilde z(t)+\tilde s(t)^\top R\tilde s(t)\right)\,\dif t,
\end{equation*}
where $\tilde z(0)=\tilde z_0$. Substituting $\tilde s=-K_s\tilde z$ gives
\begin{equation*}
J_{\mathrm{tr}}(\tilde z_0;K_s)=\int_0^\infty\tilde z(t)^\top\left(\widetilde Q+K_s^\top RK_s\right)\tilde z(t)\,\dif t.
\end{equation*}
Whenever $A_{\mathrm{cl}}$ is Hurwitz, this cost admits the exact Lyapunov characterization
\begin{equation*}
J_{\mathrm{tr}}(\tilde z_0;K_s)=\tilde z_0^\top S(K_s)\tilde z_0,
\end{equation*}
where $S(K_s)=S(K_s)^\top\succ0$ is the unique solution of
\begin{equation*}
A_{\mathrm{cl}}^\top S+SA_{\mathrm{cl}}+\widetilde Q+K_s^\top RK_s=0.
\end{equation*}

Without the structural constraint, minimizing this quadratic cost for every initial condition leads to the standard infinite-horizon LQR solution in \eqref{eq:std_lqr_bar2}. Under the constraint $K_s\in\Sparse(\mathcal M)$, the corresponding structured synthesis problem is nonconvex. We therefore seek a structured stabilizing gain together with a quadratic certificate that bounds $J_{\mathrm{tr}}(\tilde z_0;K_s)$ for every initial deviation $\tilde z_0$. The resulting LMI formulation is independent of a particular initial condition.

Specifically, suppose there exists $P\succ0$ such that
\begin{equation}\label{eq:dissipation_ineq_core_final}
\frac{\dif}{\dif t}\bigl(\tilde z^\top P\tilde z\bigr)+\tilde z^\top\widetilde Q\tilde z+\tilde s^\top R\tilde s\le0
\qquad \forall\,\tilde z.
\end{equation}
Then $\tilde z^\top P\tilde z$ is a quadratic storage function for the deviation dynamics. In particular, $A_{\mathrm{cl}}$ is Hurwitz, and integrating \eqref{eq:dissipation_ineq_core_final} along \eqref{eq:incremental_homogeneous} yields
\begin{equation}
\label{eq:guaranteed_cost_bound_final}
J_{\mathrm{tr}}(\tilde z_0;K_s)\le \tilde z_0^\top P\tilde z_0,\qquad \forall\,\tilde z_0\in\reals^{nm}.
\end{equation}
A sufficient condition for \eqref{eq:dissipation_ineq_core_final} is
\begin{equation}\label{eq:quad_ineq_P_final}
A_{\mathrm{cl}}^\top P + P A_{\mathrm{cl}} + \widetilde Q + K_s^\top R K_s \prec 0.
\end{equation}

With the change of variables
\begin{equation*}
X:=P^{-1}\succ0,\qquad Y:=K_s X,
\end{equation*}
so that $K_s=Y X^{-1}$, condition \eqref{eq:quad_ineq_P_final} is implied, via Schur complement, by the LMI
\begin{equation}\label{eq:XY_LMI_guaranteed_cost_final}
\begin{bmatrix}
\Sym(\widetilde A X - Y) & X & Y^\top\\
X & -\widetilde Q^{-1} & 0\\
Y & 0 & -R^{-1}
\end{bmatrix}\prec 0.
\end{equation}

The LMI \eqref{eq:XY_LMI_guaranteed_cost_final} is therefore a convex sufficient condition for the existence of a structured stabilizing gain $K_s=Y X^{-1}$ together with the certified bound \eqref{eq:guaranteed_cost_bound_final}. To enforce locality, one may impose $Y\in\Sparse(\mathcal M)$ and, to preserve locality in the recovered gain, restrict $X\in\Sparse_C(\mathcal M)$ where
\begin{equation*}
\Sparse_C(\mathcal M)\!:=\!\Big\{X| Y X^{-1}\in \Sparse(\mathcal M), \forall Y \in \Sparse(\mathcal M) \Big\}
\end{equation*}
such that if $Y\in\Sparse(\mathcal M)$ then  $K_s=Y X^{-1} \in \Sparse(\mathcal M)$.

If one wishes to bias the design toward smaller certificates, and the structure-preserving constraint $X\in\Sparse_C(\mathcal M)$ is imposed through a convex representation, the guaranteed-cost synthesis can be posed as the semidefinite program
\begin{equation}\label{eq:LMI_trace_problem}
\begin{aligned}
\min_{X,Y,Z}\quad & \Tr(Z)\\
\text{s.t.}\quad & X=X^\top\succ0,\qquad Z=Z^\top\succeq0,\\
& \begin{bmatrix}
\Sym(\widetilde A X - Y) & X & Y^\top\\
X & -\widetilde Q^{-1} & 0\\
Y & 0 & -R^{-1}
\end{bmatrix}\prec 0,\\
& \begin{bmatrix} Z & I\\ I & X\end{bmatrix}\succeq 0,\\
& Y\in\Sparse(\mathcal M),\\
& X\in \Sparse_C(\mathcal M).
\end{aligned}
\end{equation}
The recovered gain is $K_s=Y X^{-1}$, while $Z\succeq X^{-1}=P$ provides a convex surrogate for the guaranteed-cost matrix. In particular, feasibility of \eqref{eq:LMI_trace_problem} guarantees the existence of $P=X^{-1}$ satisfying \eqref{eq:quad_ineq_P_final}, and thus stability together with the bound \eqref{eq:guaranteed_cost_bound_final}.

At this stage, the structured gain $K_s$ has been synthesized. We next compute the optimal steady-state pair $(z^\star,s^\star)$  by solving the same strictly convex steady-state problem \eqref{eq:agentwise_equiv_lq_inc_min} used for the centralized controller. This problem is unchanged by the previously selected gain $K_s$. Indeed, for a fixed $K_s$, an equilibrium of the affine controller satisfies
\begin{equation*}
0=(\widetilde A-K_s)z+d_z-k_s,\qquad s=-K_s z-k_s,
\end{equation*}
which gives
\begin{equation*}
s=-\widetilde A z-d_z,
\end{equation*}
which is precisely the constraint of \eqref{eq:agentwise_equiv_lq_inc_min} and does not depend on $K_s$. Once the optimal pair $(z^\star,s^\star)$ has been computed, the affine offset is recovered as
\begin{equation*}
k_s^\star=d_z+(\widetilde A-K_s)z^\star,
\end{equation*}
which guarantees
\begin{equation*}
z_\infty=z^\star,\qquad s_\infty=s^\star.
\end{equation*}
The local computation of $(z^\star,s^\star)$, and hence of $k_s^\star$, is developed in Section~\ref{subsec:local_sdp_final}.

The dissipativity-based construction provides a convex sufficient condition for structured stabilization together with an explicit upper bound on the quadratic transient cost. Its main potential limitation is conservatism: the storage matrix $P$ is optimized indirectly through an upper bound, and the imposed structure on the certificate may exclude otherwise admissible structured gains. We next consider an alternative synthesis method based on the LQR--$\mathcal H_2$ equivalence, which evaluates the same quadratic performance density through an averaged response measure.

\section{Structured LMI synthesis and evaluation: an $\mathcal H_2$-based approach}
\label{subsec:affine_offsets_LMI_eval_H2}

We now introduce a second route for synthesizing the structured gain. The dissipativity-based construction provides a quadratic upper bound on $J_{\mathrm{tr}}(\tilde z_0;K_s)$ for every initial deviation $\tilde z_0$ and uses a trace objective to obtain an initial-condition-independent surrogate. The $\mathcal H_2$-based formulation instead evaluates the aggregate quadratic response over the state directions selected by $B_0$, thereby providing a different initial-condition-independent criterion for the synthesis of $K_s$. Once the structured gain has been computed, the optimal steady-state pair $(z^\star,s^\star)$ is determined separately, and the affine offset is recovered so that the closed loop converges to this pair.

Consider the auxiliary homogeneous deviation dynamics
\begin{equation}
\label{eq:auxiliary_h2_deviation_system} \dot \eta = (\widetilde A-K_s)\eta + B_0 v,
\end{equation}
with performance output
\begin{equation}
\label{eq:h2_performance_output} \zeta = (C_1-D_{12}K_s)\eta,
\end{equation}
where
\begin{equation*}
C_1= \begin{bmatrix} \widetilde Q^{1/2} \\
0 \end{bmatrix}, \qquad D_{12}= \begin{bmatrix} 0 \\
R^{1/2} \end{bmatrix}.
\end{equation*}
Then
\begin{equation*}
\zeta^\top \zeta = \eta^\top \left( \widetilde Q+K_s^\top R K_s \right) \eta .
\end{equation*}
The matrix $B_0$ selects the state directions over which the transient performance is evaluated. The isotropic choice $B_0=I_{nm}$ penalizes the average response over all opinion directions, whereas lower-rank or weighted choices of $B_0$ can be used to emphasize selected agents or groups of them,  topics, or uncertain initial-condition subspaces.

For a fixed gain $K_s$ such that $A_{\mathrm{cl}}=\widetilde A-K_s$ is Hurwitz, the squared $\mathcal H_2$ norm of the map $v\mapsto\zeta$ can be bounded through a controllability Gramian $X=X^\top\succ0$ satisfying
\begin{equation*}
A_{\mathrm{cl}}X+XA_{\mathrm{cl}}^\top+B_0B_0^\top\prec0.
\end{equation*}
The associated output-energy contribution is
\begin{equation}
\operatorname{trace}\!\left((C_1-D_{12}K_s)X(C_1-D_{12}K_s)^\top\right). \label{eq:gram_ener}
\end{equation}
Thus, minimizing this quantity provides an initial-condition-agnostic surrogate for reducing the average quadratic transient response over the directions selected by $B_0$.

The gain $K_s$ is synthesized by minimizing an upper bound on the squared $\mathcal H_2$ norm of the map $v\mapsto \zeta$. Introducing the change of variables $Y=K_sX$ and imposing the structure-preserving constraint on $X$ through a convex representation, one obtains the convex synthesis condition
\begin{equation}
\label{eq:structured_h2_gain_synthesis} \begin{aligned} \min_{X,Y,Z}\quad & \operatorname{trace}(Z) \\
\text{s.t.}\quad & X=X^\top\succ0, \\
& \operatorname{Sym}(\widetilde A X-Y)+B_0B_0^\top \prec 0, \\
& \begin{bmatrix} Z & C_1X-D_{12}Y \\
(C_1X-D_{12}Y)^\top & X \end{bmatrix}\succeq0, \\
& Y\in\Sparse(\mathcal M), \\
& X\in\Sparse_C(\mathcal M), \end{aligned}
\end{equation}
where $Z$ is an auxiliary variable used to upper-bound the output-energy term \eqref{eq:gram_ener}. Indeed, by the Schur complement and the relation $Y=K_sX$, the third constraint implies
\begin{equation*}
Z\succeq(C_1-D_{12}K_s)X(C_1-D_{12}K_s)^\top.
\end{equation*}
As in the dissipativity-based formulation, restricting $X$ to the compatible structure-preserving class and imposing $Y\in\Sparse(\mathcal M)$ ensure that the recovered gain $K_s=YX^{-1}$ belongs to $\Sparse(\mathcal M)$.

Problem~\eqref{eq:structured_h2_gain_synthesis} should be interpreted as a structured transient-shaping surrogate. It does not solve the exact structured problem \eqref{eq:structured_exact_opt} for a prescribed initial condition. Instead, it produces a sparse stabilizing gain with a certified averaged quadratic transient-performance bound for the auxiliary system~\eqref{eq:auxiliary_h2_deviation_system}--\eqref{eq:h2_performance_output}.

Once $K_s$ has been obtained, the affine offset is recovered as
\begin{equation*}
 k_s^\star = d_z+(\widetilde A-K_s)z^\star.
\end{equation*}
This choice guarantees that the $\mathcal H_2$-based controller converges to the same optimal pair $(z^\star,s^\star)$ as the centralized benchmark and the dissipativity-based controller. Thus, the two-layer structure separates the synthesis of the structured static gain from the local computation of the fixed optimal steady state. The following section addresses both computations through local-to-global constructions.

\section{Local-to-global certificates for the two-layer design}
\label{subsec:local_sdp_final}

We now provide local constructions for the two-layer formulation, beginning with the structured gain layer. We construct cluster-based SDPs that use only local matrix blocks and local interaction information, while producing decision variables that satisfy the corresponding centralized gain-synthesis LMIs.

The central structural point is that the local decomposition must respect the block structure of the inverse terms $R^{-1}$ and $\widetilde Q^{-1}$, since both appear explicitly in the centralized LMIs. To this end, let
\begin{equation*}
\Pi_{\mathcal B}:=\{\mathcal B_1,\dots,\mathcal B_{q}\},
\end{equation*}
with $q\in [n]$, be a partition of $\mathcal V$ that is no finer than the finest block partition induced by $R$, referred to as the \emph{base-block partition}, and call each $\mathcal B_a$ a \emph{base block}. Define
\begin{equation*}
d_a:=|\mathcal B_a|m,\qquad a\in[q].
\end{equation*}
Equivalently, after ordering the agents according to $\Pi_{\mathcal B}$, one can write
\begin{align*}
R&=\diag(R_1,\dots,R_q)\succ0, \\
R^{-1}&=\diag(R_1^{-1},\dots,R_q^{-1}),
\end{align*}
with $R_a\in\mathbb S^{d_a}_{\succ0}$. In the present formulation, a natural choice for $\Pi_{\mathcal B}$ is the connected partition induced by the graph used in $J_{\mathrm F}$, or any coarsening of it. Since $\widetilde Q$ is diagonal, or agent-wise block diagonal, in the design considered here, it is also block diagonal with respect to $\Pi_{\mathcal B}$. Hence, with the same ordering,
\begin{align*}
\widetilde Q&=\diag(Q_1,\dots,Q_q)\succ0, \\
\widetilde Q^{-1}&=\diag(Q_1^{-1},\dots,Q_q^{-1}),
\end{align*}
with $Q_a\in\mathbb S^{d_a}_{\succ0}$.

We restrict the Lyapunov certificate to the same inverse-preserving block structure,
\begin{equation*}
X=\diag(X_1,\dots,X_q), \qquad X_a\in\mathbb S^{d_a}_{\succ0}.
\end{equation*}
Thus $X^{-1}$ is block diagonal with respect to $\Pi_{\mathcal B}$.

The sparsity constraint $Y\in\Sparse(\mathcal M)$ is interpreted at the resolution induced by $\Pi_{\mathcal B}$. Thus, the mask $\mathcal M$ specifies which blocks $Y_{ab}\in\mathbb R^{d_a\times d_b}$, associated with the pair $(\mathcal B_a,\mathcal B_b)$, are allowed to be nonzero. If $\Pi_{\mathcal B}$ is the agent-wise partition, this coincides with the usual $m\times m$ agent-level sparsity pattern. If $\Pi_{\mathcal B}$ is coarser, then $\mathcal M$ represents an aggregated inter-block sparsity pattern rather than a fine agent-level sparsity pattern.

With this convention, if $Y\in\Sparse(\mathcal M)$, then the recovered gain
\begin{equation*}
K_s=YX^{-1}
\end{equation*}
satisfies
\begin{equation*}
(K_s)_{ab}=Y_{ab}X_b^{-1}.
\end{equation*}
Hence $(K_s)_{ab}=0$ whenever $Y_{ab}=0$, so the prescribed inter-block sparsity pattern is preserved. Notice that, when $\Pi_{\mathcal B}$ is coarser than the agent-wise partition, the block $(K_s)_{ab}$ is generally dense whenever $Y_{ab}\neq0$, because $X_b^{-1}$ is generally dense inside $\mathcal B_b$. 
% Therefore, the present formulation preserves sparsity at the $\Pi_{\mathcal B}$-block level, not necessarily at the finer agent level inside each block.

For each base block $\mathcal B_a$, let
\begin{equation*}
E_a\in\mathbb R^{nm\times d_a}
\end{equation*}
be the selector such that $z_a=E_a^\top z$ collects the states of the agents in $\mathcal B_a$, with
\begin{equation*}
E_a^\top E_b=\delta_{ab}I_{d_a}.
\end{equation*}
We partition the global matrices according to $\Pi_{\mathcal B}$ as
\begin{equation*}
\widetilde A=[A_{ab}]_{a,b=1}^q, \qquad Y=[Y_{ab}]_{a,b=1}^q,
\end{equation*}
with
\begin{equation*}
A_{ab},Y_{ab}\in\mathbb R^{d_a\times d_b}.
\end{equation*}

Define the \emph{base-block interaction graph}
\begin{equation*}
\mathcal G_\mathcal B=([q],\mathcal E_\mathcal B)
\end{equation*}
as the undirected graph over the base blocks such that, for $a\neq b$, the edge $\{a,b\}$ belongs to $\mathcal E_\mathcal B$ whenever (i) $A_{ab}\neq0$ or $A_{ba}\neq0$, or (ii) the feedback blocks $Y_{ab}$ or $Y_{ba}$ are allowed by the prescribed block-level sparsity mask
% Thus, $\mathcal E_\mathcal B$ collects all off-diagonal block interactions that must be represented in the local certificates, either because they appear in the dynamic or because they are admissible in the structured feedback gain. Diagonal block contributions are not represented as edges and they are distributed separately among the clusters containing the corresponding block through the weights $\beta_{a\ell}$ below.
and choose a family of $p$ possibly overlapping \emph{certificate clusters}
\begin{equation}
\{\mathcal C_\ell\}_{\ell=1}^p, \qquad \mathcal C_\ell\subseteq[q], \label{eq:certC}
\end{equation}
 \green{such that every base block index belongs to at least one certificate cluster and, for every edge $\{a,b\}\in\mathcal E_\mathcal B$, there exists at least one $\ell\in[p]$ such that $a,b\in\mathcal C_\ell$.}  For each edge $\{a,b\}\in\mathcal E_\mathcal B$, \green{define the index set of certificate clusters containing both endpoints of that edge as}
\begin{equation*}
\mathcal I_{ab} := \{\ell\in[p]: a,b\in\mathcal C_\ell\}.
\end{equation*}
By construction, $\mathcal I_{ab}\neq\emptyset$ for every $\{a,b\}\in\mathcal E_\mathcal B$. We choose \emph{edge weights} $\theta_{ab,\ell}\ge0$, for $\ell\in\mathcal I_{ab}$, satisfying
\begin{equation}
\label{eq:theta_partition_unity_local_sdp} \sum_{\ell\in\mathcal I_{ab}}\theta_{ab,\ell}=1, \qquad \{a,b\}\in\mathcal E_\mathcal B,
\end{equation}
and the \emph{diagonal weights} $\beta_{a\ell}>0$, for $a\in\mathcal C_\ell$, below, satisfying
\begin{equation}
\label{eq:beta_partition_unity_local_sdp} \sum_{\ell:\,a\in\mathcal C_\ell}\beta_{a\ell}=1, \qquad a\in[q].
\end{equation}
\green{The weights $\beta_{a\ell}$ distribute the diagonal contribution associated with base-block index $a$ among the certificate clusters satisfying $a\in\mathcal C_\ell$, whereas the weights $\theta_{ab,\ell}$ distribute the off-diagonal interaction associated with $\{a,b\}$ among the local certificates indexed by $\ell\in\mathcal I_{ab}$.}

For a certificate cluster $\mathcal C_\ell=\{a_1,\dots,a_{|\mathcal C_\ell|}\}$, define
\begin{equation*}
E_{\mathcal C_\ell} := \begin{bmatrix} E_{a_1}&\cdots&E_{a_{|\mathcal C_\ell|}} \end{bmatrix}, \quad F_{\mathcal C_\ell} := \diag(E_{\mathcal C_\ell},E_{\mathcal C_\ell},E_{\mathcal C_\ell}),
\end{equation*}
\begin{equation*}
G_{\mathcal C_\ell}:=\diag(E_{\mathcal C_\ell},E_{\mathcal C_\ell}), \qquad H_{\mathcal C_\ell}:=\diag(G_{\mathcal C_\ell},E_{\mathcal C_\ell}),
\end{equation*}

and let $X_\ell^{\mathrm A}$ be the block-diagonal matrix formed by the blocks $X_a$, $a\in\mathcal C_\ell$ stacked in the appropriate ordering. Let also $X_\ell^\beta$, $Q_\ell^{-1}$, and $R_\ell^{-1}$ be the corresponding weighted block-diagonal matrices with diagonal blocks $\beta_{a\ell}X_a$, $\beta_{a\ell}Q_a^{-1}$, and $\beta_{a\ell}R_a^{-1}$, respectively. Define local block matrices $A_\ell=[(A_\ell)_{ab}]$ and $Y_\ell=[(Y_\ell)_{ab}]$, indexed by $a,b\in\mathcal C_\ell$, as follows. For diagonal blocks,
\begin{equation*}
(A_\ell)_{aa}=\beta_{a\ell}A_{aa}, \quad (Y_\ell)_{aa}=\beta_{a\ell}Y_{aa}.
\end{equation*}
For off-diagonal blocks, with $a\neq b$, define
\begin{align*}
&(A_\ell)_{ab} = \begin{cases} \theta_{ab,\ell}A_{ab}, & \{a,b\}\in\mathcal E_\mathcal B,\ \ell\in\mathcal I_{ab}, \\
0, & \text{otherwise}, \end{cases} \\
&(Y_\ell)_{ab} = \begin{cases} \theta_{ab,\ell}Y_{ab}, & \{a,b\}\in\mathcal E_\mathcal B,\ \ell\in\mathcal I_{ab}, \\
0, & \text{otherwise}. \end{cases}
\end{align*}
% Here $\beta_{a\ell}$ distributes diagonal block contributions among all clusters containing block $a$, whereas $\theta_{ab,\ell}$ distributes the off-diagonal interaction associated with the edge $\{a,b\}$ among all clusters containing that edge. 
Thus, the local matrices are not merely principal submatrices of the centralized LMI, but they are weighted local contributions whose lifted sum reconstructs the centralized matrix.

\begin{remark}
\green{The base-block partition $\Pi_{\mathcal B}=\{\mathcal B_1,\dots,\mathcal B_q\}$ and the certificate clusters $\{\mathcal C_\ell\}_{\ell=1}^p$ play different roles. The base blocks $\mathcal B_a$ define the resolution at which $R^{-1}$, $\widetilde Q^{-1}$, and $X^{-1}$ are locally representable. Each certificate cluster $\mathcal C_\ell\subseteq[q]$ identifies the collection of base blocks included in the corresponding local LMI and determines which diagonal and off-diagonal contributions are assigned to that local certificate.} If $\alpha_{\mathrm F}=0$, or more generally if $R$ is agent-wise block diagonal, one may take $\Pi_{\mathcal B}=\{\{1\},\dots,\{n\}\}$. When $J_{\mathrm F}$ is active, $R$ is generally block diagonal only with respect to the connected partition used in $L_{\mathcal F}$, and the blocks $\mathcal B_a$ should be chosen accordingly.
\end{remark}

\subsection{Local-to-global dissipativity-based design}
We first state the lifted-sum identity for the guaranteed-cost LMI.

\begin{proposition}
\label{prop:liftsum_assembly}
\green{Let $\{\mathcal C_\ell\}_{\ell=1}^p$ be the certificate-cluster family in \eqref{eq:certC}, and} assume that conditions \eqref{eq:theta_partition_unity_local_sdp}--\eqref{eq:beta_partition_unity_local_sdp} hold. Let
\begin{equation*}
\mathcal L(X,Y):= \begin{bmatrix} \Sym(\widetilde A X-Y) & X & Y^\top \\
X & -\widetilde Q^{-1} & 0 \\
Y & 0 & -R^{-1} \end{bmatrix},
\end{equation*}
and, for each $\ell\in[p]$, define
\begin{equation*}
 \mathcal L_\ell(X,Y):= \begin{bmatrix} \Lambda_\ell & X_\ell^\beta & Y_\ell^\top \\
X_\ell^\beta & -Q_\ell^{-1} & 0 \\
Y_\ell & 0 & -R_\ell^{-1} \end{bmatrix},
\end{equation*}
where
\begin{equation*}
\Lambda_\ell:=\Sym(A_\ell X_\ell^{\mathrm A}-Y_\ell).
\end{equation*}
Then
\begin{equation}
\label{eq:liftsum_L} \mathcal L(X,Y) = \sum_{\ell=1}^p F_{\mathcal C_\ell} \mathcal L_\ell(X,Y) F_{\mathcal C_\ell}^\top .
\end{equation}
Consequently, if $\mathcal L_\ell(X,Y)\preceq-\varepsilon_\ell I$ for all $\ell \in [p]$, with $\varepsilon_\ell>0$, then $\mathcal L(X,Y)\prec0$.
\end{proposition}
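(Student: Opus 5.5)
The plan is to verify the identity \eqref{eq:liftsum_L} block by block: both sides are $3\times3$ block matrices with respect to the ordering $(\tilde z,\cdot,\cdot)$, and each block is further partitioned according to $\Pi_{\mathcal B}$. Since $F_{\mathcal C_\ell}=\diag(E_{\mathcal C_\ell},E_{\mathcal C_\ell},E_{\mathcal C_\ell})$, the $(i,j)$ outer block of $F_{\mathcal C_\ell}\mathcal L_\ell F_{\mathcal C_\ell}^\top$ equals $E_{\mathcal C_\ell}(\mathcal L_\ell)_{ij}E_{\mathcal C_\ell}^\top$. This is the zero-padded embedding of the local block into the global $\Pi_{\mathcal B}$-indexed positions $(a,b)$ with $a,b\in\mathcal C_\ell$. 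It therefore suffices to check, for every pair of base-block indices $(a,b)$ and every outer block, that summing the local contributions over all $\ell$ with $a,b\in\mathcal C_\ell$ returns the corresponding block of $\mathcal L(X,Y)$.

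\textbf{Easy blocks.} The $(2,2)$, $(3,3)$ and $(1,2)$ blocks are block diagonal. For the diagonal position $(a,a)$ they give $\sum_{\ell:\,a\in\mathcal C_\ell}\beta_{a\ell}Q_a^{-1}=Q_a^{-1}$, and analogously for $R_a^{-1}$ and $X_a$, by \eqref{eq:beta_partition_unity_local_sdp}. This uses that $\widetilde Q^{-1}$, $R^{-1}$ and $X$ are block diagonal with respect to $\Pi_{\mathcal B}$. The zero blocks are trivially preserved.

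For the $(3,1)$ block $Y$, I will split into cases. A diagonal position gives $\sum_\ell\beta_{a\ell}Y_{aa}=Y_{aa}$. An off-diagonal position $a\ne b$ with $\{a,b\}\in\mathcal E_{\mathcal B}$ gives $\sum_{\ell\in\mathcal I_{ab}}\theta_{ab,\ell}Y_{ab}=Y_{ab}$ by \eqref{eq:theta_partition_unity_local_sdp}. For an off-diagonal position with $\{a,b\}\notin\mathcal E_{\mathcal B}$, the mask forces $Y_{ab}=0$, matching the zero local blocks. The $(1,3)$ block follows by transposition.

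\textbf{Main step: the $(1,1)$ block.} This block contains the product $\widetilde A X$, and I will treat it by exploiting linearity. Since $X$ is block diagonal, $(\widetilde AX)_{ab}=A_{ab}X_b$. Likewise, because $X_\ell^{\mathrm A}$ is unweighted block diagonal, $(A_\ell X_\ell^{\mathrm A})_{ab}=(A_\ell)_{ab}X_b$. The same case analysis as for $Y$ then applies:
\begin{itemize}
\item for $a=b$, the sum is $\sum_\ell\beta_{a\ell}A_{aa}X_a=A_{aa}X_a$;
\item for an edge $\{a,b\}\in\mathcal E_{\mathcal B}$, the sum is $\sum_{\ell\in\mathcal I_{ab}}\theta_{ab,\ell}A_{ab}X_b=A_{ab}X_b$;
\item for a non-edge, $A_{ab}=0$ by the definition of $\mathcal E_{\mathcal B}$, so both sides vanish.
\end{itemize}
Consequently $\sum_\ell E_{\mathcal C_\ell}(A_\ell X_\ell^{\mathrm A}-Y_\ell)E_{\mathcal C_\ell}^\top=\widetilde AX-Y$. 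Because $\Sym$ is linear and commutes with the congruence, $\Sym(M)$ with $M\mapsto E M E^\top$, the $(1,1)$ blocks also agree.

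The point needing care is that the weights enter only once: $X_\ell^{\mathrm A}$ is unweighted, so the product does not pick up $\beta^2$ or $\beta\theta$ factors. I will also note that for $a\ne b$ the local block $(A_\ell)_{ab}$ is nonzero only when $\ell\in\mathcal I_{ab}$, which is exactly when $a,b\in\mathcal C_\ell$. The covering property of the clusters guarantees that $\mathcal I_{ab}\neq\emptyset$, so no edge contribution is lost.

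\textbf{Consequence.} Take any $v\ne0$. Then
\[
v^\top\mathcal L v=\sum_\ell (F_{\mathcal C_\ell}^\top v)^\top\mathcal L_\ell(F_{\mathcal C_\ell}^\top v)\le-\sum_\ell\varepsilon_\ell\|F_{\mathcal C_\ell}^\top v\|^2 .
\]
Every base block belongs to some cluster, so $\sum_\ell F_{\mathcal C_\ell}F_{\mathcal C_\ell}^\top\succeq I$. This gives $v^\top\mathcal L v\le-\min_\ell\varepsilon_\ell\|v\|^2<0$, hence $\mathcal L(X,Y)\prec0$.
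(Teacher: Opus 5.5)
Your proof is correct and follows essentially the same route as the paper. Both arguments check the lifted-sum identity block by block, using the partition-of-unity conditions on $\beta_{a\ell}$ and $\theta_{ab,\ell}$ together with $(\widetilde A X)_{ab}=A_{ab}X_b$ from the block-diagonality of $X$, and then sum the local inequalities, using that every base block lies in some cluster. Your explicit treatment of non-edge positions via $Y\in\Sparse(\mathcal M)$ and the definition of $\mathcal E_{\mathcal B}$, and your bound $\sum_\ell F_{\mathcal C_\ell}F_{\mathcal C_\ell}^\top\succeq I$, spell out two points that the paper's proof leaves implicit.
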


\begin{proof}
The proof is blockwise. Since
\begin{equation*}
F_{\mathcal C_\ell} = \diag(E_{\mathcal C_\ell},E_{\mathcal C_\ell},E_{\mathcal C_\ell}),
\end{equation*}
it suffices to verify the corresponding lifted identities at the level of the base blocks.

For the $(1,2)$ block,
\begin{equation*}
E_{\mathcal C_\ell}X_\ell^\beta E_{\mathcal C_\ell}^\top = \sum_{a\in\mathcal C_\ell} \beta_{a\ell}E_aX_aE_a^\top .
\end{equation*}
Summing over $\ell$ and using \eqref{eq:beta_partition_unity_local_sdp} gives $X$. The same calculation gives the $(2,2)$ and $(3,3)$ blocks, namely $-\widetilde Q^{-1}$ and $-R^{-1}$.

For the $Y$-blocks, the diagonal terms satisfy
\begin{equation*}
\sum_{\ell:\,a\in\mathcal C_\ell}\beta_{a\ell}Y_{aa}=Y_{aa}.
\end{equation*}
The off-diagonal terms are reproduced because, for each edge $\{a,b\}\in\mathcal E_\mathcal B$, the local contributions are weighted by $\theta_{ab,\ell}$ over all $\ell\in\mathcal I_{ab}$, and the weights satisfy
\begin{equation*}
\sum_{\ell\in\mathcal I_{ab}}\theta_{ab,\ell}=1.
\end{equation*}
The same argument applies to the off-diagonal blocks of $\widetilde A X$, since $X$ is block diagonal and
\begin{equation*}
(\widetilde A X)_{ab}=A_{ab}X_b .
\end{equation*}
The diagonal terms of $\widetilde A X$ sum correctly by \eqref{eq:beta_partition_unity_local_sdp}. This proves \eqref{eq:liftsum_L}.

Finally, if $\mathcal L_\ell\preceq-\varepsilon_\ell I$, then
\begin{equation*}
\mathcal L(X,Y) \preceq -\sum_{\ell=1}^p \varepsilon_\ell F_{\mathcal C_\ell}F_{\mathcal C_\ell}^\top .
\end{equation*}
Since every base block index belongs to at least one certificate cluster, the matrix on the right-hand side is negative definite. Hence $\mathcal L(X,Y)\prec0$.
\end{proof}

For fixed constants $\varepsilon_\ell>0$, the corresponding local guaranteed-cost SDP is
\begin{equation*}
 \begin{aligned} \min_{\{X_a\},\{Y_{ab}\},\{Z_a\}}\quad & \sum_{a=1}^q \Tr(Z_a) \\
\text{s.t.}\quad & X_a\succ0,\qquad a\in[q], \\
& Y\in\Sparse(\mathcal M), \\
& \mathcal L_\ell(X,Y)\preceq-\varepsilon_\ell I, \qquad \ell\in[p], \\
& \begin{bmatrix} Z_a & I \\
I & X_a \end{bmatrix}\succeq0, \qquad a\in[q]. \end{aligned}
\end{equation*}
By Proposition~\ref{prop:liftsum_assembly}, any feasible solution satisfies the centralized guaranteed-cost LMI. Moreover, by Schur complement, $Z_a\succeq X_a^{-1}$, so the objective upper-bounds $\Tr(X^{-1})$. The global certificate matrix is $X=\diag(X_1,\dots,X_q)$, and the structured gain is recovered as
\begin{equation*}
K_s=YX^{-1}.
\end{equation*}
Since $X^{-1}$ is block diagonal with respect to $\Pi_{\mathcal B}$, the sparsity pattern imposed on $Y$ is preserved in the recovered gain $K_s$.

\subsection{Local-to-global $\mathcal H_2$-based design}
We now give the corresponding $\mathcal H_2$-based certificates. Let
\begin{equation*}
W_0:=B_0B_0^\top .
\end{equation*}
For a purely local lifted-sum decomposition, choose $B_0$ so that $W_0$ is block diagonal with respect to $\Pi_{\mathcal B}$:
\begin{equation*}
W_0=\diag(W_{0,1},\dots,W_{0,q}), \qquad W_{0,a}\succeq0 .
\end{equation*}
The all-directions choice $B_0=I_{nm}$ gives $W_{0,a}=I_{d_a}$. Define the centralized matrices
\begin{equation*}
C_1= \begin{bmatrix} \widetilde Q^{1/2} \\
0 \end{bmatrix}, \qquad D_{12}= \begin{bmatrix} 0 \\
R^{1/2} \end{bmatrix}.
\end{equation*}
The centralized averaged $\mathcal H_2$-type certificate is given by
\begin{align}
\mathcal N(X,Y) &:= \Sym(\widetilde A X-Y)+W_0 \prec0, \label{eq:central_h2_N} \\
\mathcal O(X,Y,Z) &:= \begin{bmatrix} Z & C_1X-D_{12}Y \\
(C_1X-D_{12}Y)^\top & X \end{bmatrix} \succeq0, \label{eq:central_h2_O}
\end{align}
where
\begin{equation*}
Z= \diag (Z_1^{\mathrm Q},\dots,Z_q^{\mathrm Q}, Z_1^{\mathrm R},\dots,Z_q^{\mathrm R}).
\end{equation*}

For each certificate cluster $\mathcal C_\ell$, define
\begin{equation*}
W_{0,\ell}^{\beta} := \diag(\{\beta_{a\ell}W_{0,a}\}_{a\in\mathcal C_\ell})
\end{equation*}
and
\begin{equation*}
 \mathcal N_\ell(X,Y) := \Sym(A_\ell X_\ell^{\mathrm A}-Y_\ell) + W_{0,\ell}^{\beta}.
\end{equation*}
For the output LMI, define
\begin{align*}
&C_{1,\ell}^{\beta} := \begin{bmatrix} \diag(\{\beta_{a\ell}Q_a^{1/2}\}_{a\in\mathcal C_\ell}) \\
0 \end{bmatrix}, \\
&D_{12,\ell} := \begin{bmatrix} 0 \\
\diag(\{R_a^{1/2}\}_{a\in\mathcal C_\ell}) \end{bmatrix}.
\end{align*}
The factor $\beta_{a\ell}$, rather than $\sqrt{\beta_{a\ell}}$, is used in $C_{1,\ell}^{\beta}$ because the lifted local matrices are summed linearly. Define
\begin{equation*}
M_\ell(X,Y) := C_{1,\ell}^{\beta}X_\ell^{\mathrm A} - D_{12,\ell}Y_\ell .
\end{equation*}
Finally, introduce local output blocks by setting
\begin{equation*}
Z_\ell^\beta := \diag \left( \{\beta_{a\ell}Z_a^{\mathrm Q}\}_{a\in\mathcal C_\ell}, \{\beta_{a\ell}Z_a^{\mathrm R}\}_{a\in\mathcal C_\ell} \right),
\end{equation*}
and define
\begin{equation*}
 \mathcal O_\ell(X,Y,Z) := \begin{bmatrix} Z_\ell^\beta & M_\ell(X,Y) \\
M_\ell(X,Y)^\top & X_\ell^\beta \end{bmatrix}.
\end{equation*}

\begin{proposition}
\label{prop:h2_liftsum_assembly}
\green{Let $\{\mathcal C_\ell\}_{\ell=1}^p$ be the certificate-cluster family in \eqref{eq:certC}, and} assume that conditions \eqref{eq:theta_partition_unity_local_sdp}--\eqref{eq:beta_partition_unity_local_sdp} hold. Then the local $\mathcal H_2$-type matrices satisfy the lifted-sum identities
\begin{equation*}
\mathcal N(X,Y) = \sum_{\ell=1}^p E_{\mathcal C_\ell} \mathcal N_\ell(X,Y) E_{\mathcal C_\ell}^\top
\end{equation*}
and
\begin{equation*}
\mathcal O(X,Y,Z) = \sum_{\ell=1}^p H_{\mathcal C_\ell} \mathcal O_\ell(X,Y,Z) H_{\mathcal C_\ell}^\top .
\end{equation*}
Consequently, if
\begin{equation*}
\mathcal N_\ell(X,Y)\preceq-\varepsilon_\ell I, \qquad \mathcal O_\ell(X,Y,Z)\succeq0, \qquad \ell=1,\dots,p,
\end{equation*}
with $\varepsilon_\ell>0$, then the centralized constraints \eqref{eq:central_h2_N}--\eqref{eq:central_h2_O} hold.
\end{proposition}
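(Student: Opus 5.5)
The plan is to mimic the blockwise argument used for Proposition~\ref{prop:liftsum_assembly}: verify each lifted-sum identity base block by base block, then deduce the centralized inequalities from the local ones by congruence and summation. Throughout, two facts do the work: $X$, $W_0$, $\widetilde Q^{1/2}$, $R^{1/2}$ and $Z$ are all block diagonal with respect to $\Pi_{\mathcal B}$, and every nonzero off-diagonal block of $\widetilde A$ or $Y$ sits on an edge of $\mathcal E_\mathcal B$. The second fact holds by the definition of $\mathcal G_\mathcal B$ and $Y\in\Sparse(\mathcal M)$.

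\textbf{The identity for $\mathcal N$.} I will show that $\sum_\ell E_{\mathcal C_\ell}(\cdot)E_{\mathcal C_\ell}^\top$ reproduces $\widetilde A X$, $Y$ and $W_0$. Symmetrization commutes with congruence and summation, so this is enough.
\begin{itemize}
\item The diagonal blocks $(A_\ell X_\ell^{\mathrm A})_{aa}=\beta_{a\ell}A_{aa}X_a$ and $(Y_\ell)_{aa}=\beta_{a\ell}Y_{aa}$ sum over $\{\ell:a\in\mathcal C_\ell\}$ to $A_{aa}X_a$ and $Y_{aa}$ by \eqref{eq:beta_partition_unity_local_sdp}.
\item For $a\neq b$, the off-diagonal blocks $\theta_{ab,\ell}A_{ab}X_b$ and $\theta_{ab,\ell}Y_{ab}$ sum over $\mathcal I_{ab}$ to $A_{ab}X_b=(\widetilde AX)_{ab}$ and $Y_{ab}$ by \eqref{eq:theta_partition_unity_local_sdp}.
\item Blocks on non-edges are zero on both sides.
\item $W_{0,\ell}^\beta$ lifts and sums to $W_0$, again by \eqref{eq:beta_partition_unity_local_sdp}.
\end{itemize}

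\textbf{The identity for $\mathcal O$.} The lifting $H_{\mathcal C_\ell}=\diag(G_{\mathcal C_\ell},E_{\mathcal C_\ell})$ places the two output rows (the $\widetilde Q$-part and the $R$-part) with $G_{\mathcal C_\ell}$ and the state column with $E_{\mathcal C_\ell}$. I would first fix the ordering of $Z$, namely all $Z^{\mathrm Q}_a$ blocks followed by all $Z^{\mathrm R}_a$ blocks, so that it matches the stacking in $C_1$ and $D_{12}$ and the lifting by $G_{\mathcal C_\ell}$. The four blocks then go as follows.
\begin{itemize}
\item $(1,1)$: $Z_\ell^\beta$ lifts and sums to $Z$.
\item $(2,2)$: $X_\ell^\beta$ lifts and sums to $X$.
\item $(1,2)$, $\widetilde Q$-row: $G_{\mathcal C_\ell}C_{1,\ell}^\beta X_\ell^{\mathrm A}E_{\mathcal C_\ell}^\top$ contributes $\beta_{a\ell}Q_a^{1/2}X_a$ on block $a$. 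This sums to $\widetilde Q^{1/2}X$. This is exactly why $\beta_{a\ell}$, not $\sqrt{\beta_{a\ell}}$, appears in $C_{1,\ell}^\beta$.
\item $(1,2)$, $R$-row: $R_a^{1/2}$ is unweighted, so the local term is $R_a^{1/2}(Y_\ell)_{ab}$. Because $R^{1/2}$ is block diagonal, left multiplication acts blockwise and commutes with the $\beta/\theta$ summation of $Y_\ell$ already done for $\mathcal N$. The sum is therefore $R^{1/2}Y$.
\end{itemize}

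\textbf{The consequence.} For $\mathcal N$, I will write $\mathcal N(X,Y)\preceq-\sum_\ell\varepsilon_\ell E_{\mathcal C_\ell}E_{\mathcal C_\ell}^\top$. Every base block lies in some cluster, so $\sum_\ell E_{\mathcal C_\ell}E_{\mathcal C_\ell}^\top\succ0$, which gives \eqref{eq:central_h2_N}. For $\mathcal O$, each $H_{\mathcal C_\ell}\mathcal O_\ell H_{\mathcal C_\ell}^\top$ is a congruence of a PSD matrix, hence PSD, and a sum of PSD matrices is PSD, which gives \eqref{eq:central_h2_O}.

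I expect the main obstacle to be purely bookkeeping in the $\mathcal O$ identity. The row ordering of $Z$ and of $C_1X-D_{12}Y$ has to agree with $G_{\mathcal C_\ell}$. The $R$-row also needs care: the weights live only in $Y_\ell$, while $D_{12,\ell}$ is unweighted, so the weighted pieces must recombine exactly once. I would check this by writing a generic block $(a,b)$ of each term explicitly, treating the cases $a=b$, $\{a,b\}\in\mathcal E_\mathcal B$, and non-edges separately.
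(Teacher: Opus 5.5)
Your proposal is correct and follows essentially the same route as the paper: a blockwise check of both lifted-sum identities using the $\beta$/$\theta$ partitions of unity and the block-diagonal structure of $X$, $W_0$, $\widetilde Q^{1/2}$, $R^{1/2}$ and $Z$, followed by congruence and summation of the local inequalities. You also make explicit details the paper leaves as ``immediate'': strictness via $\sum_\ell \varepsilon_\ell E_{\mathcal C_\ell}E_{\mathcal C_\ell}^\top\succ0$, the commutation of the block-diagonal $R^{1/2}$ with the lifting, and the implicit requirement $Y\in\Sparse(\mathcal M)$, without which the identities would fail on non-edges.
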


\begin{proof}
The identity for $\mathcal N(X,Y)$ follows from the same blockwise argument used in Proposition~\ref{prop:liftsum_assembly}. The diagonal terms of $\Sym(\widetilde A X-Y)$ and $W_0$ are split according to the weights $\beta_{a\ell}$. The off-diagonal terms of $\Sym(\widetilde A X-Y)$ are reconstructed because, for each edge $\{a,b\}\in\mathcal E_\mathcal B$, the corresponding local contributions are weighted by $\theta_{ab,\ell}$ over all clusters containing that edge, and these weights satisfy \eqref{eq:theta_partition_unity_local_sdp}.

For the output LMI, the lower-right block gives
\begin{equation*}
\sum_{\ell=1}^p E_{\mathcal C_\ell} X_\ell^\beta E_{\mathcal C_\ell}^\top = X
\end{equation*}
by the partition-of-unity condition. The upper-left block gives
\begin{equation*}
\sum_{\ell=1}^p G_{\mathcal C_\ell} Z_\ell^\beta G_{\mathcal C_\ell}^\top = Z
\end{equation*}
for the same reason. It remains to check the off-diagonal block. The $\widetilde Q^{1/2}X$ part is diagonal in the base block partition and is reconstructed by the weights $\beta_{a\ell}$. The $R^{1/2}Y$ part is reconstructed because the diagonal blocks of $Y$ are split by the weights $\beta_{a\ell}$, whereas each admissible off-diagonal block of $Y$ is split over the local certificates indexed by $\ell\in\mathcal I_{ab}$ by the weights $\theta_{ab,\ell}$. Hence
\begin{equation*}
\sum_{\ell=1}^p G_{\mathcal C_\ell} M_\ell(X,Y) E_{\mathcal C_\ell}^\top = C_1X-D_{12}Y .
\end{equation*}
Therefore the lifted sum of the local output LMIs equals $\mathcal O(X,Y,Z)$. The implication follows immediately from summing the local semidefinite inequalities.
\end{proof}

For fixed constants $\varepsilon_\ell>0$, the associated local $\mathcal H_2$-type SDP is
\begin{equation}
\label{eq:local_SDP_h2_final} \begin{aligned} \min_{\{X_a\},\{Y_{ab}\},\{Z_a^{\mathrm Q}\},\{Z_a^{\mathrm R}\}}\quad & \sum_{a=1}^q\Tr(Z_a^{\mathrm Q}) + \sum_{a=1}^q\Tr(Z_a^{\mathrm R}) \\
\text{s.t.}\quad & X_a\succ0,\qquad a\in[q], \\
& Y\in\Sparse(\mathcal M), \\
& \mathcal N_\ell(X,Y)\preceq-\varepsilon_\ell I, \qquad \ell\in[p], \\
& \mathcal O_\ell(X,Y,Z)\succeq0, \qquad \ell\in[p]. \end{aligned}
\end{equation}
The global certificate matrix is $X=\diag(X_1,\dots,X_q)$, and the recovered gain is again
\begin{equation*}
K_s=YX^{-1}.
\end{equation*}
By Proposition~\ref{prop:h2_liftsum_assembly}, every feasible point of \eqref{eq:local_SDP_h2_final} satisfies the centralized averaged $\mathcal H_2$-type gain-synthesis LMIs.

\subsection{Steady-state layer}
We now turn to the steady-state layer. The objective is to compute the unique optimal steady-state pair $(z^\star,s^\star)$ of \eqref{eq:agentwise_equiv_lq_inc_min} from local objective contributions. This pair is fixed by the original normalized stage cost, and it is not selected through an additional designer-chosen surrogate. The steady-state sparsity graph is determined by the exact reduced static objective and \green{need not coincide with the base-block interaction graph $\mathcal G_{\mathcal B}$ used in the structured gain layer}. 

With the convention
\begin{equation*}
s(z)=-K_s z-k_s,
\end{equation*}
the closed-loop dynamics are
\begin{equation*}
\dot z=(\widetilde A-K_s)z+d_z-k_s.
\end{equation*}
Once $z^\star$ has been computed, the choice
\begin{equation*}
 k_s^\star = d_z+(\widetilde A-K_s)z^\star
\end{equation*}
enforces
\begin{equation*}
z_\infty=z^\star, \qquad s_\infty = -K_s z^\star-k_s^\star = -\widetilde A z^\star-d_z = s^\star.
\end{equation*}

Substituting the steady-state input through $s=-\widetilde A z-d_z$, the exact steady-state problem \eqref{eq:agentwise_equiv_lq_inc_min} is equivalent to
\begin{equation}
\label{eq:ss_static_problem_local_certificates} z^\star = \argmin_{z\in\mathbb R^{nm}} \Phi(z), \qquad \Phi(z) := \widetilde\ell \left( z,-\widetilde A z-d_z \right).
\end{equation}
Expanding the normalized stage cost gives
\begin{equation*}
\Phi(z) = \frac{1}{2} z^\top H_{\mathrm{ss}}z + h_{\mathrm{ss}}^\top z + \kappa_{\mathrm{ss}},
\end{equation*}
where
\begin{align*}
H_{\mathrm{ss}} &= 2\left( \widetilde Q + \widetilde A^\top R\widetilde A \right), \\
h_{\mathrm{ss}} &= 2\left( c + \widetilde A^\top Rd_z \right), \\
\kappa_{\mathrm{ss}} &= d_z^\top Rd_z.
\end{align*}

Since $\widetilde Q\succ0$ and $R\succ0$, $H_{\mathrm{ss}} = 2\left( \widetilde Q+\widetilde A^\top R\widetilde A \right) \succ0$.
% \begin{equation*}
% H_{\mathrm{ss}} = 2\left( \widetilde Q+\widetilde A^\top R\widetilde A \right) \succ0.
% \end{equation*}
Hence \eqref{eq:ss_static_problem_local_certificates} has the unique solution
\begin{equation*}
z^\star = -H_{\mathrm{ss}}^{-1}h_{\mathrm{ss}}, \qquad s^\star = -\widetilde A z^\star-d_z.
\end{equation*}
Notice that even if $\widetilde A$ is sparse and $R$ is block diagonal, the product $\widetilde A^\top R\widetilde A$ can introduce two-hop couplings. Hence the graph relevant for the steady-state layer is the steady-state sparsity graph associated to $H_{\mathrm{ss}}$, not necessarily the graph used in the structured gain layer.

We partition the matrices and vectors according to the same partition $\Pi_{\mathcal B}=\{\mathcal B_1,\dots,\mathcal B_q\}$ used in the structured gain layer,
\begin{equation*}
\widetilde A=[A_{ab}]_{a,b=1}^q, \, \widetilde Q=[\widetilde Q_{ab}]_{a,b=1}^q, \, R=\diag(R_1,\dots,R_q),
\end{equation*}
and
\begin{equation*}
c=\col(c_1,\dots,c_q), \qquad d_z=\col(d_{z,1},\dots,d_{z,q}).
\end{equation*}
Then, the steady-state hessian and linear term are partitioned conformably as
\begin{equation*}
H_{\mathrm{ss}}=[H_{ab}^{\mathrm{ss}}]_{a,b=1}^q, \qquad h_{\mathrm{ss}}=\col(h_1^{\mathrm{ss}},\dots,h_q^{\mathrm{ss}}).
\end{equation*}
Since
\begin{equation*}
H_{\mathrm{ss}} = 2\left( \widetilde Q+\widetilde A^\top R\widetilde A \right), \qquad h_{\mathrm{ss}} = 2\left( c+\widetilde A^\top Rd_z \right),
\end{equation*}
their blocks satisfy
\begin{equation*}
 H_{ab}^{\mathrm{ss}} = 2\left( \widetilde Q_{ab} + \sum_{k=1}^q A_{ka}^\top R_kA_{kb} \right), \qquad a,b\in[q],
\end{equation*}
and
\begin{equation*}
 h_a^{\mathrm{ss}} = 2\left( c_a+ \sum_{k=1}^q A_{ka}^\top R_kd_{z,k} \right), \qquad a\in[q].
\end{equation*}
Therefore, $H_{ab}^{\mathrm{ss}}$ collects the contributions of all dynamics equations that depend simultaneously on $z_a$ and $z_b$. In particular, the diagonal block $H_{aa}^{\mathrm{ss}}$ contains not only the contribution of $A_{aa}$ and $R_a$, but also the contribution of every block $A_{ka}$ through which $z_a$ influences the $k$th dynamics equation. Similarly, $H_{ab}^{\mathrm{ss}}$ may be nonzero even when $A_{ab}=A_{ba}=0$, provided that $z_a$ and $z_b$ influence at least one common dynamic block.

This motivates the \emph{steady-state sparsity graph}
\begin{equation*}
\mathcal G_{\mathrm{ss}} = \bigl( \{1,\dots,q\}, \mathcal E_{\mathrm{ss}} \bigr),
\end{equation*}
where
\begin{equation*}
\mathcal E_{\mathrm{ss}} := \left\{ \{a,b\}: a<b,\; H_{ab}^{\mathrm{ss}}\neq0 \right\}.
\end{equation*}
Since $H_{\mathrm{ss}}$ is symmetric, $H_{ba}^{\mathrm{ss}}=(H_{ab}^{\mathrm{ss}})^\top$, and it is sufficient to inspect one of the two corresponding blocks. The graph $\mathcal G_{\mathrm{ss}}$ therefore contains exactly the off-diagonal block couplings appearing in the reduced static objective.

Choose a family of \emph{steady-state clusters}
\begin{equation*}
\{\mathcal D_\alpha\}_{\alpha=1}^{s}, \qquad \mathcal D_\alpha\subseteq[q],
\end{equation*}
\green{such that every base block index belongs to at least one steady-state cluster and, for every edge $\{a,b\}\in\mathcal E_{\mathrm{ss}}$, there exists at least one $\alpha\in[s]$ such that $a,b\in\mathcal D_\alpha$.} For each edge $\{a,b\}\in\mathcal E_{\mathrm{ss}}$, define
\begin{equation*}
\mathcal I_{ab}^{\mathrm{ss}} := \left\{ \alpha\in[s]: a,b\in\mathcal D_\alpha \right\}.
\end{equation*}
By construction, $\mathcal I_{ab}^{\mathrm{ss}}\neq\emptyset$. Choose \emph{steady-state edge weights} $\rho_{ab,\alpha}\ge0$, for $\alpha\in\mathcal I_{ab}^{\mathrm{ss}}$, satisfying
\begin{equation}
\label{eq:ss_edge_partition_unity} \sum_{\alpha\in\mathcal I_{ab}^{\mathrm{ss}}} \rho_{ab,\alpha} = 1, \qquad \{a,b\}\in\mathcal E_{\mathrm{ss}},
\end{equation}
with $\rho_{ab,\alpha}=\rho_{ba,\alpha}$. Choose also \emph{steady-state diagonal weights} $\gamma_{a\alpha}>0$, for every $\alpha$ such that $a\in\mathcal D_\alpha$, satisfying
\begin{equation}
\label{eq:ss_gamma_partition_unity} \sum_{\alpha:\,a\in\mathcal D_\alpha} \gamma_{a\alpha} = 1, \qquad a\in[q].
\end{equation}
The weights $\gamma_{a\alpha}$ distribute the diagonal hessian blocks and the linear terms among the steady-state clusters containing block $a$, whereas the weights $\rho_{ab,\alpha}$ distribute each off-diagonal hessian block among the steady-state clusters containing the corresponding edge.

For a steady-state cluster
\begin{equation*}
\mathcal D_\alpha = \{a_1,\dots,a_{|\mathcal D_\alpha|}\},
\end{equation*}
define
\begin{equation*}
E_{\mathcal D_\alpha} = \begin{bmatrix} E_{a_1}&\cdots&E_{a_{|\mathcal D_\alpha|}} \end{bmatrix}, \qquad z_\alpha = E_{\mathcal D_\alpha}^\top z.
\end{equation*}
The local steady-state objective associated with $\mathcal D_\alpha$ is
\begin{equation*}
\Psi_\alpha(z_\alpha) = \frac{1}{2} z_\alpha^\top H_\alpha^{\mathrm{ss}} z_\alpha + \left( h_\alpha^{\mathrm{ss}} \right)^\top z_\alpha.
\end{equation*}
Its diagonal hessian blocks and linear terms are defined as
\begin{equation*}
\left( H_\alpha^{\mathrm{ss}} \right)_{aa} = \gamma_{a\alpha}H_{aa}^{\mathrm{ss}}, \qquad \left( h_\alpha^{\mathrm{ss}} \right)_a = \gamma_{a\alpha}h_a^{\mathrm{ss}}, \qquad a\in\mathcal D_\alpha.
\end{equation*}
For distinct $a,b\in\mathcal D_\alpha$, the off-diagonal blocks are defined by
\begin{equation*}
\left( H_\alpha^{\mathrm{ss}} \right)_{ab} = \begin{cases} \rho_{ab,\alpha}H_{ab}^{\mathrm{ss}}, & \{a,b\}\in\mathcal E_{\mathrm{ss}}, \quad \alpha\in\mathcal I_{ab}^{\mathrm{ss}}, \\
0, & \text{otherwise}. \end{cases}
\end{equation*}
Because of \eqref{eq:ss_gamma_partition_unity} and \eqref{eq:ss_edge_partition_unity}, the local contributions reconstruct the reduced steady-state objective exactly:
\begin{equation}
\label{eq:ss_cost_cluster_decomp} \Phi(z) = \sum_{\alpha=1}^{s} \Psi_\alpha \left( E_{\mathcal D_\alpha}^\top z \right) + \kappa_{\mathrm{ss}}.
\end{equation}
The constant $\kappa_{\mathrm{ss}}$ does not affect the optimizer and therefore does not need to be distributed among the local steady-state objectives. Consequently, solving the decomposed problem with consensus constraints on the base-block variables shared by different steady-state clusters is equivalent to solving the centralized static problem \eqref{eq:ss_static_problem_local_certificates}, and recovers the same unique optimizer $z^\star$.

Since $H_{\mathrm{ss}}\succ0$, the decomposed problem has the unique optimizer $z^\star$ of \eqref{eq:ss_static_problem_local_certificates}. The corresponding steady-state input and affine offset are
\begin{equation*}
s^\star=-\widetilde A z^\star-d_z, \qquad k_s^\star=d_z+(\widetilde A-K_s)z^\star.
\end{equation*}

\begin{remark}
\label{rem:graph_informed_cluster_selection}
The certificate cluster cover and the associated weights can be chosen from the block interaction structure of the dynamics. The partition $\Pi_{\mathcal B}$ is selected so that $R$, $\widetilde Q$, and $X$ are block diagonal, while $\mathcal G_\mathcal B$ identifies the couplings to be represented locally. An agent based cover yields small but potentially conservative LMIs, whereas neighborhoods, communities, or unions of strongly coupled blocks retain more of the centralized coupling structure at the cost of larger local constraints. Neutral splitting is obtained with
\begin{equation*}
\beta_{a\ell}=\frac{1}{|\{j\in[p]:a\in\mathcal C_j\}|},\qquad \theta_{ab,\ell}=\frac{1}{|\mathcal I_{ab}|}.
\end{equation*}
Alternatively, the weights may reflect interaction strength. They are fixed before solving the SDP to preserve convexity. Their optimization, together with optimal certificate cluster selection, is left for future work.
\end{remark}

\begin{remark}
\label{rem:ss_weights_exactness}
The steady-state weights $\gamma_{a\alpha}$ and $\rho_{ab,\alpha}$ only distribute the objective contributions among the local subproblems. Under \eqref{eq:ss_gamma_partition_unity}, \eqref{eq:ss_edge_partition_unity}, and consensus on shared variables, every admissible choice reconstructs the same objective $\Phi$ and therefore the same optimizer $z^\star$. Their choice may nevertheless affect numerical conditioning and the convergence of the distributed solver. In contrast, although the weights $\beta_{a\ell}$ and $\theta_{ab,\ell}$ also reconstruct the centralized LMI, imposing negativity of each local matrix is only sufficient for global negativity. Their choice can therefore affect feasibility and conservatism.
\end{remark}

\begin{remark}\label{rem:admm_pointer}
The local SDP certificates can be implemented through standard consensus-ADMM schemes; see, e.g., \citep{Boyd2011Distributed}. Each certificate cluster in the structured gain layer keeps local copies of the shared block variables, while the consensus step enforces agreement on duplicated variables. After convergence, the structured gain is assembled as $K_s=YX^{-1}$. The same principle applies to the steady-state layer, where each steady-state cluster keeps local copies of the variables $z_a$ it contains, and consensus constraints enforce agreement on shared blocks. Since the decomposition \eqref{eq:ss_cost_cluster_decomp} is exact, convergence of the consensus optimization recovers the unique optimizer $z^\star$ of the original static problem \eqref{eq:ss_static_problem_local_certificates}. The affine offset is then recovered as $k_s^\star=d_z+(\widetilde A-K_s)z^\star$. Since the contribution of the present paper is the local-to-global certificate rather than the distributed optimization algorithm itself, we do not further detail the ADMM iterations.
\end{remark}

\begin{figure}%[t]
    \centering
\includegraphics[width=0.8\linewidth,clip]{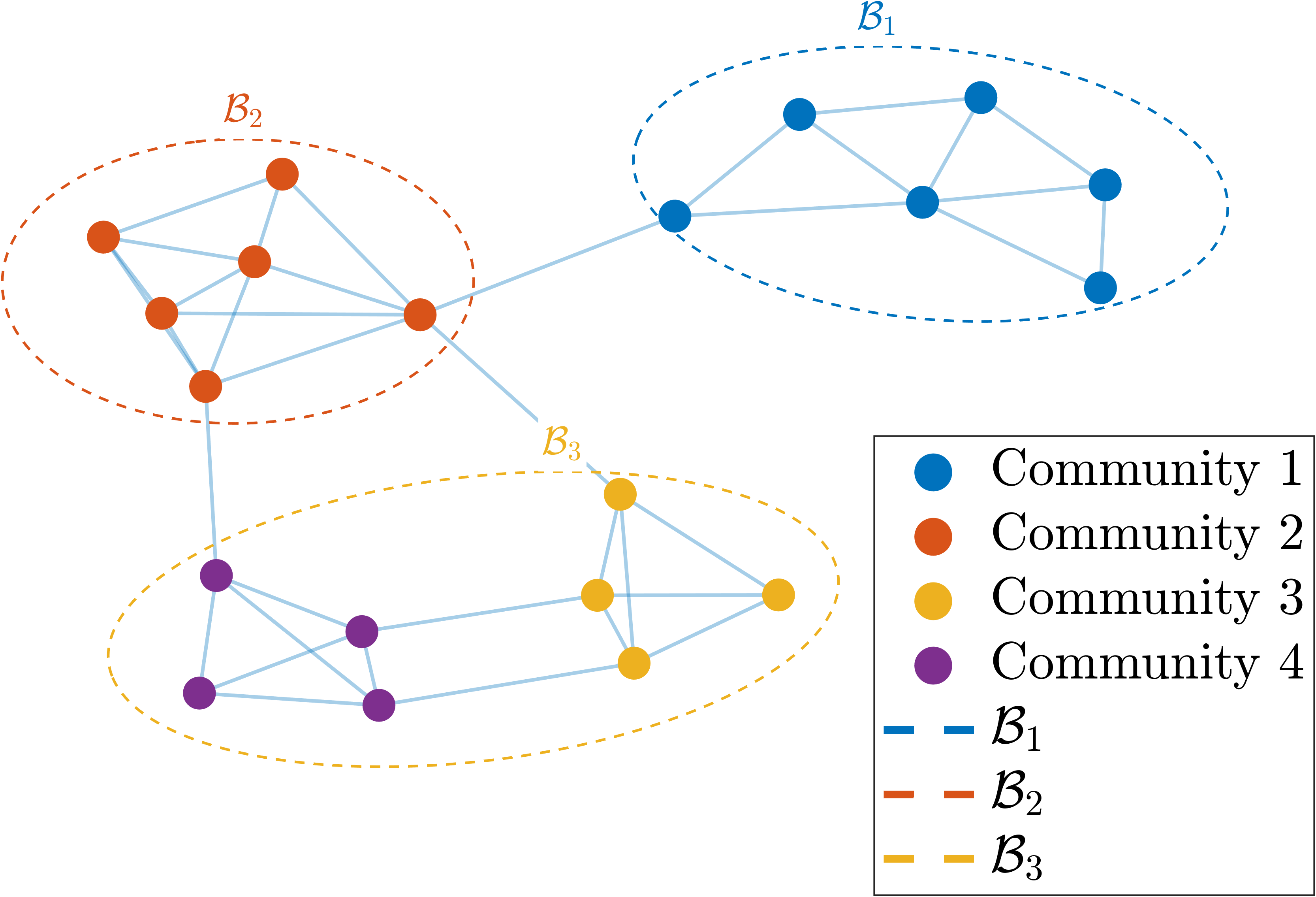}
    \vskip -5pt
\caption{Social interaction graph with four communities. The three base blocks $\mathcal B_1$, $\mathcal B_2$, and $\mathcal B_3$ coincide with the exposure groups. The structured gain layer uses the certificate clusters $\mathcal C_1=\{1,2\}$ and $\mathcal C_2=\{2,3\}$, whereas the steady-state layer uses the steady-state clusters $\mathcal D_1=\{1,2\}$, $\mathcal D_2=\{2,3\}$, and $\mathcal D_3=\{1,3\}$.}
    \label{fig:graph}
\end{figure} 

\section{Numerical Illustrations}
\label{sec:numerical-experiments}
This section illustrates the local-to-global two-layer design developed above on a network of agents with community structure. The example is organized to make each step of the construction explicit. We first specify the network, the model parameters, the base block partition, and the prescribed sparsity pattern of the feedback gain $K_s$. We then instantiate the local certificates for the structured gain layer and synthesize the structured gains for the dissipativity-based and $\mathcal H_2$-based formulations. For both designs, the local matrices are derived explicitly and their lifted sums are verified to reconstruct the corresponding centralized structured LMIs. \green{After the two structured gains have been obtained}, we construct the steady-state layer associated with the original normalized stage cost. We compute the exact reduced hessian $H_{\mathrm{ss}}$ and its steady-state sparsity graph, choose steady-state clusters that cover all resulting couplings, and decompose the static objective into local contributions. Since this decomposition is exact, the associated consensus problem recovers the unique optimizer $z^\star$ of the centralized steady-state problem. This optimal steady state is then used to recover the affine offsets associated with the two synthesized gains. Finally, we simulate the resulting closed-loop systems and compare the uncontrolled dynamics, the centralized Riccati-based controller, the structured dissipativity-based controller, and the structured $\mathcal H_2$-based controller.

\subsection{Network, model parameters, and certificate structure}
\label{subsec:numerical_setup}
\green{The network consists of $n=20$ agents divided into four communities of sizes $6$, $6$, $4$, and $4$. Each agent holds opinions on $m=3$ topics, so that the overall state is $z\in\reals^{60}$.} 
The inter-topic coupling matrix is 
\begin{equation*}
C= \begin{bmatrix} 0.7 & 0.2 & 0.1 \\
0.2 & 0.6 & 0.2 \\
0.1 & 0.2 & 0.7 \end{bmatrix}.
\end{equation*}
Matrix $C$ is symmetric, nonnegative, and row stochastic. Its eigenvalues are $1$, $0.6$, and $0.4$, and hence $C-I_3$ has one semisimple zero eigenvalue and two strictly stable eigenvalues, consistently with Assumption~\ref{ass:C}. The anchoring matrix is $A_a = \diag \left( 0.30\,\mathbbm{1}_6, 0.35\,\mathbbm{1}_6, 0.25\,\mathbbm{1}_8 \right).$
% \begin{equation*}
% A_a = \diag \left( 0.30\,\mathbbm{1}_6, 0.35\,\mathbbm{1}_6, 0.25\,\mathbbm{1}_8 \right).
% \end{equation*}
and the 
inner beliefs are constant within each base block and are chosen as $z_{\circ,i}=\col(0.65,-0.20,0.30)$ for $i\in\mathcal B_1$, $z_{\circ,i}=\col(0.05,0.45,-0.25)$ for $i\in\mathcal B_2$, and $z_{\circ,i}=\col(-0.60,-0.10,0.50)$ for $i\in\mathcal B_3$.
% The inner beliefs are constant inside each group and are defined by
% \begin{equation*}
% z_{\circ,i} = \begin{cases} \col(0.65,-0.20,0.30), & i\in\mathcal B_1, \\
% \col(0.05,0.45,-0.25), & i\in\mathcal B_2, \\
% \col(-0.60,-0.10,0.50), & i\in\mathcal B_3. \end{cases}
% \end{equation*}
To obtain a heterogeneous but reproducible initial condition, we set for each $i\in[20]$,  $\theta_i:=2\pi(i-1)/20$ and choose $z_i(0)=z_{\circ,i}+0.10\,\col\left(\sin(\theta_i),\cos(\theta_i),\sin(2\theta_i)\right)$.
% \begin{equation*}
% z_i(0) = z_{\circ,i} + 0.10 \begin{bmatrix} \sin\left(\dfrac{2\pi(i-1)}{20}\right) \\[1mm]
% \cos\left(\dfrac{2\pi(i-1)}{20}\right) \\[1mm]
% \sin\left(\dfrac{4\pi(i-1)}{20}\right) \end{bmatrix}, \qquad i\in[20].
% \end{equation*}

%The exposure partition contains three groups because the two communities of $4$ agents are combined into a single exposure group.

For this example, we decide to divide the agents into the three base blocks $\mathcal B_1=\{1,\ldots,6\}$, $\mathcal B_2=\{7,\ldots,12\}$, and $\mathcal B_3=\{13,\ldots,20\}$.
% \begin{equation*}
% \mathcal B_1=\{1,\ldots,6\}, \quad \mathcal B_2=\{7,\ldots,12\}, \quad \mathcal B_3=\{13,\ldots,20\}.
% \end{equation*}
The graph contains within-base-block connections and between-base-block connections only between $\mathcal B_1$ and $\mathcal B_2$, and between $\mathcal B_2$ and $\mathcal B_3$. With respect to the partition $\Pi_{\mathcal B}=\{\mathcal B_1,\mathcal B_2,\mathcal B_3\}$,
% \begin{equation*}
% \Pi_{\mathcal B}=\{\mathcal B_1,\mathcal B_2,\mathcal B_3\},
% \end
\green{the base-block interaction graph $\mathcal G_\mathcal B$ is a path graph, and the block dimensions corresponding to each base block are respectively $d_1=d_2=18$ and $d_3=24.$}
The exposure partition used in the performance index $J\sbs{F}$ coincides with the base-block partition, namely $\Pi_{\mathcal F}=\Pi_{\mathcal B}$, so that $L_u$ contains only within-base-block recommendation couplings. The performance weights are fixed as $W_{\mathrm{EN}}=0.6I_{60}$, $W_{\mathrm P}=0.4I_{60}$, $W_{\mathrm D}=I_{60}$, $W_{\mathrm{EX}}=1.5I_{60}$, and $W_{\mathrm C}=0.8I_{60}$, with $\alpha_{\mathrm F}=0.25$.
% \begin{equation*}
% W_{\mathrm{EN}}=0.6I_{60}, \qquad W_{\mathrm P}=0.4I_{60}, \qquad W_{\mathrm D}=I_{60},
% \end{equation*}
% \begin{equation*}
% W_{\mathrm{EX}}=1.5I_{60}, \qquad W_{\mathrm C}=0.8I_{60}, \qquad \alpha_{\mathrm F}=0.25.
% \end{equation*}
Consequently, the selected weights satisfy Lemma~3 in \citep{mariano_frasca_companion}, and thus $\widetilde Q \succ 0$, which implies that they satisfy Assumption~\ref{ass:strict_pd_reduced_cost}. Moreover, since $R^{-1}N$ is block diagonal with respect to $\Pi_{\mathcal B}$, the off-diagonal block sparsity pattern of $\widetilde A=A_{cz}-R^{-1}N$ coincides with that of $A_{cz}$. 

We restrict the certificate matrix and the feedback change-of-variable matrix to
\begin{equation*}
X=\diag(X_1,X_2,X_3), \qquad Y= \begin{bmatrix} Y_{11} & Y_{12} & 0 \\
Y_{21} & Y_{22} & Y_{23} \\
0 & Y_{32} & Y_{33} \end{bmatrix},
\end{equation*}
where $X_1,X_2\in\mathbb S_{\succ0}^{18}$, $X_3\in\mathbb S_{\succ0}^{24}$. Since $X^{-1}$ is block diagonal, the recovered gain $K_s=YX^{-1}$ has the same block sparsity pattern as $Y$.

For both structured gain synthesis formulations, we use the two overlapping certificate clusters $\mathcal C_1=\{1,2\}$ and $\mathcal C_2=\{2,3\}$.
% \begin{equation*}
% \mathcal C_1=\{1,2\}, \qquad \mathcal C_2=\{2,3\}.
% \end{equation*}
Each off-diagonal interaction belongs to a unique certificate cluster, and hence the edge weights are $\theta_{12,1}=1$, and $\theta_{23,2}=1$, while 
% \begin{equation*}
% \theta_{12,1}=1, \qquad \theta_{23,2}=1.
% \end{equation*}
the diagonal block contributions are split according to $\beta_{1,1}=1$, $\beta_{2,1}=\beta_{2,2}=\frac{1}{2}$, and $\beta_{3,2}=1$.
% \begin{equation*}
% \beta_{1,1}=1, \qquad \beta_{2,1}=\beta_{2,2}=\frac{1}{2}, \qquad \beta_{3,2}=1.
% \end{equation*}
These weights satisfy the required partition-of-unity conditions and will be used in both the dissipativity-based and the $\mathcal H_2$-based local certificates.

\subsection{Dissipativity-based local synthesis}
\label{subsec:numerical_dissipativity}
We first instantiate the local certificates associated with the dissipativity-based formulation. The block sparsity pattern, certificate clusters, and weights are those defined in Section~\ref{subsec:numerical_setup}. Since the base-block interaction graph is a path, the off-diagonal interactions $\{1,2\}$ and $\{2,3\}$ are assigned to $\mathcal C_1$ and $\mathcal C_2$, respectively, while the diagonal contribution of block $2$ is equally divided between the two overlapping certificate clusters.

For the first certificate cluster $\mathcal C_1=\{1,2\}$, define
\begin{equation*}
A_{\mathcal C_1} = \begin{bmatrix} A_{11} & A_{12} \\
A_{21} & \frac{1}{2}A_{22} \end{bmatrix}, \qquad Y_{\mathcal C_1} = \begin{bmatrix} Y_{11} & Y_{12} \\
Y_{21} & \frac{1}{2}Y_{22} \end{bmatrix},
\end{equation*}
whereas, for $\mathcal C_2=\{2,3\}$, define
\begin{equation*}
A_{\mathcal C_2} = \begin{bmatrix} \frac{1}{2}A_{22} & A_{23} \\
A_{32} & A_{33} \end{bmatrix}, \qquad Y_{\mathcal C_2} = \begin{bmatrix} \frac{1}{2}Y_{22} & Y_{23} \\
Y_{32} & Y_{33} \end{bmatrix}.
\end{equation*}

The associated unweighted and weighted matrices are $X_{\mathcal C_1}^{\mathrm A}=\diag(X_1,X_2)$, $X_{\mathcal C_1}^{\beta}=\diag\left(X_1,\frac{1}{2}X_2\right)$, $X_{\mathcal C_2}^{\mathrm A}=\diag(X_2,X_3)$, and $X_{\mathcal C_2}^{\beta}=\diag\left(\frac{1}{2}X_2,X_3\right)$. Similarly, $Q_{\mathcal C_1}^{-1}=\diag\left(Q_1^{-1},\frac{1}{2}Q_2^{-1}\right)$, $R_{\mathcal C_1}^{-1}=\diag\left(R_1^{-1},\frac{1}{2}R_2^{-1}\right)$, $Q_{\mathcal C_2}^{-1}=\diag\left(\frac{1}{2}Q_2^{-1},Q_3^{-1}\right)$, and $R_{\mathcal C_2}^{-1}=\diag\left(\frac{1}{2}R_2^{-1},R_3^{-1}\right)$.
% The associated unweighted and weighted certificate matrices are
% \begin{equation*}
% X_{\mathcal C_1}^{\mathrm A} = \diag(X_1,X_2), \qquad X_{\mathcal C_1}^{\beta} = \diag\left(X_1,\frac{1}{2}X_2\right),
% \end{equation*}
% and
% \begin{equation*}
% X_{\mathcal C_2}^{\mathrm A} = \diag(X_2,X_3), \quad X_{\mathcal C_2}^{\beta} = \diag\left(\frac{1}{2}X_2,X_3\right).
% \end{equation*}
% Similarly,
% \begin{equation*}
% Q_{\mathcal C_1}^{-1} = \diag\left(Q_1^{-1},\frac{1}{2}Q_2^{-1}\right), \quad R_{\mathcal C_1}^{-1} = \diag\left(R_1^{-1},\frac{1}{2}R_2^{-1}\right),
% \end{equation*}
% and
% \begin{equation*}
% Q_{\mathcal C_2}^{-1} = \diag\left(\frac{1}{2}Q_2^{-1},Q_3^{-1}\right), \quad R_{\mathcal C_2}^{-1} = \diag\left(\frac{1}{2}R_2^{-1},R_3^{-1}\right).
% \end{equation*}

The corresponding local certificate matrices are
\begin{equation*}
 \mathcal L_{\mathcal C_\ell} = \begin{bmatrix} \Lambda_{\mathcal C_\ell} & X_{\mathcal C_\ell}^{\beta} & Y_{\mathcal C_\ell}^{\top} \\
X_{\mathcal C_\ell}^{\beta} & -Q_{\mathcal C_\ell}^{-1} & 0 \\
Y_{\mathcal C_\ell} & 0 & -R_{\mathcal C_\ell}^{-1} \end{bmatrix}, \qquad \ell\in\{1,2\},
\end{equation*}
where
\begin{equation*}
\Lambda_{\mathcal C_\ell} = \Sym\left( A_{\mathcal C_\ell}X_{\mathcal C_\ell}^{\mathrm A} - Y_{\mathcal C_\ell} \right).
\end{equation*}

The structured gain is obtained by solving
\begin{equation*}
 \begin{aligned} \min_{\{X_a\},Y,\{Z_a\}} \quad & \sum_{a=1}^{3}\Tr(Z_a) \\
\mathrm{s.t.}\quad & X_a\succ0, \qquad a\in\{1,2,3\}, \\
& \mathcal L_{\mathcal C_\ell}\prec0, \qquad \ell\in\{1,2\}, \\
& \begin{bmatrix} Z_a&I \\
I&X_a \end{bmatrix} \succeq0, \qquad a\in\{1,2,3\}, \\
& Y_{13}=0, \qquad Y_{31}=0. \end{aligned}
\end{equation*}
Denoting the solution by $X^{\mathrm{diss}}$ and $Y^{\mathrm{diss}}$, the structured gain is recovered as
\begin{equation*}
 K_s^{\mathrm{diss}} = Y^{\mathrm{diss}} \left(X^{\mathrm{diss}}\right)^{-1}.
\end{equation*}
The corresponding affine offset is computed after determining the common optimal steady state $(z^\star,s^\star)$.

\subsection{Local $\mathcal H_2$ synthesis}
\label{subsec:numerical_h2}

We next synthesize a second structured gain using the local $\mathcal H_2$ formulation. We choose $B_0=I_{60}$ and $W_0:=B_0B_0^\top=I_{60}$ so that the transient response is evaluated uniformly over all opinion-state directions. Partitioning $W_0=\diag(W_{0,1},W_{0,2},W_{0,3})$, one has $W_{0,1}=W_{0,2}=I_{18}$ and $W_{0,3}=I_{24}$.

For the certificate clusters $\mathcal C_1=\{1,2\}$ and $\mathcal C_2=\{2,3\}$, define $W_{0,\mathcal C_1}^{\beta}=\diag(I_{18},\frac{1}{2}I_{18})$ and $W_{0,\mathcal C_2}^{\beta}=\diag(\frac{1}{2}I_{18},I_{24})$, respectively. Using the local matrices introduced in the previous subsection, the local Lyapunov inequalities are $\mathcal N_{\mathcal C_\ell}=\Sym(A_{\mathcal C_\ell}X_{\mathcal C_\ell}^{\mathrm A}-Y_{\mathcal C_\ell})+W_{0,\mathcal C_\ell}^{\beta}$, for $\ell\in\{1,2\}$.
For the first certificate cluster, define $C_{1,\mathcal C_1}^{\beta}=\col(\diag(Q_1^{1/2},\frac{1}{2}Q_2^{1/2}),0)$ and $D_{12,\mathcal C_1}=\col(0,\diag(R_1^{1/2},R_2^{1/2}))$, while for the second certificate cluster define $C_{1,\mathcal C_2}^{\beta}=\col(\diag(\frac{1}{2}Q_2^{1/2},Q_3^{1/2}),0)$ and $D_{12,\mathcal C_2}=\col(0,\diag(R_2^{1/2},R_3^{1/2}))$. The corresponding local output matrices are $M_{\mathcal C_\ell}=C_{1,\mathcal C_\ell}^{\beta}X_{\mathcal C_\ell}^{\mathrm A}-D_{12,\mathcal C_\ell}Y_{\mathcal C_\ell}$, for $\ell\in\{1,2\}$.

Finally, introduce the local epigraph variables $Z_{\mathcal C_1}^{\beta}=\diag(Z_1^{\mathrm Q},\frac{1}{2}Z_2^{\mathrm Q},Z_1^{\mathrm R},\frac{1}{2}Z_2^{\mathrm R})$ and $Z_{\mathcal C_2}^{\beta}=\diag(\tfrac{1}{2}Z_2^{\mathrm Q},\allowbreak Z_3^{\mathrm Q},\allowbreak \tfrac{1}{2}Z_2^{\mathrm R},\allowbreak Z_3^{\mathrm R})$.
The local output LMIs are
\begin{equation*}
 \mathcal O_{\mathcal C_\ell} = \begin{bmatrix} Z_{\mathcal C_\ell}^{\beta} & M_{\mathcal C_\ell} \\
M_{\mathcal C_\ell}^{\top} & X_{\mathcal C_\ell}^{\beta} \end{bmatrix}, \qquad \ell\in\{1,2\}.
\end{equation*}

The local $\mathcal H_2$ gain is obtained by solving
\begin{equation*}
 \begin{aligned} \min_{\{X_a\},Y,\{Z_a^{\mathrm Q}\},\{Z_a^{\mathrm R}\}} \quad & \sum_{a=1}^{3}\Tr\left(Z_a^{\mathrm Q}\right) + \sum_{a=1}^{3}\Tr\left(Z_a^{\mathrm R}\right) \\
\mathrm{s.t.}\quad & X_a\succ0, \qquad a\in\{1,2,3\}, \\
& \mathcal N_{\mathcal C_\ell}\prec0, \qquad \ell\in\{1,2\}, \\
& \mathcal O_{\mathcal C_\ell}\succeq0, \qquad \ell\in\{1,2\}, \\
& Y_{13}=0, \qquad Y_{31}=0. \end{aligned}
\end{equation*}
Denoting its solution by $X^{\mathcal H_2}$ and $Y^{\mathcal H_2}$, the structured gain is recovered as
\begin{equation*}
 K_s^{\mathcal H_2} = Y^{\mathcal H_2} \left(X^{\mathcal H_2}\right)^{-1}.
\end{equation*}
As in the dissipativity-based design, the corresponding affine offset is computed after determining the common optimal steady state $(z^\star,s^\star)$.

\subsection{Local computation of the optimal steady state}
\label{subsec:numerical_steady_state}

We finally compute the optimal steady state associated with the original normalized stage cost. The reduced static objective is
\begin{equation*}
\Phi(z) = \frac{1}{2}z^\top H_{\mathrm{ss}}z + h_{\mathrm{ss}}^\top z + \kappa_{\mathrm{ss}},
\end{equation*}
where
\begin{equation*}
H_{\mathrm{ss}} = 2\left( \widetilde Q + \widetilde A^\top R\widetilde A \right), \qquad h_{\mathrm{ss}} = 2\left( c+\widetilde A^\top Rd_z \right).
\end{equation*}
Partition these quantities according to $\Pi_{\mathcal B}$ as
\begin{equation*}
H_{\mathrm{ss}} = \begin{bmatrix} H_{11}^{\mathrm{ss}} & H_{12}^{\mathrm{ss}} & H_{13}^{\mathrm{ss}} \\
H_{21}^{\mathrm{ss}} & H_{22}^{\mathrm{ss}} & H_{23}^{\mathrm{ss}} \\
H_{31}^{\mathrm{ss}} & H_{32}^{\mathrm{ss}} & H_{33}^{\mathrm{ss}} \end{bmatrix}, \qquad h_{\mathrm{ss}} = \col\left( h_1^{\mathrm{ss}}, h_2^{\mathrm{ss}}, h_3^{\mathrm{ss}} \right),
\end{equation*}
and $\kappa_{\mathrm{ss}} = d_z^\top Rd_z$. Although the base-block interaction graph is a path, the steady-state hessian generally contains the additional two-hop coupling
\begin{equation*}
H_{13}^{\mathrm{ss}} = 2A_{21}^\top R_2A_{23}.
\end{equation*}
We therefore use the steady-state clusters $\mathcal D_1=\{1,2\}$, $\mathcal D_2=\{2,3\}$, and $\mathcal D_3=\{1,3\}.$
% \begin{equation*}
% \mathcal D_1=\{1,2\}, \qquad \mathcal D_2=\{2,3\}, \qquad \mathcal D_3=\{1,3\}.
% \end{equation*}
Each off-diagonal block is assigned to the unique steady-state cluster containing the corresponding pair, while each diagonal and linear contribution is equally divided between the two steady-state clusters containing that block.

The local hessian and linear terms are consequently
\begin{equation*}
H_{\mathcal D_1}^{\mathrm{ss}} = \begin{bmatrix} \frac{1}{2}H_{11}^{\mathrm{ss}} & H_{12}^{\mathrm{ss}} \\
H_{21}^{\mathrm{ss}} & \frac{1}{2}H_{22}^{\mathrm{ss}} \end{bmatrix}, \qquad h_{\mathcal D_1}^{\mathrm{ss}} = \frac{1}{2} \col\left( h_1^{\mathrm{ss}}, h_2^{\mathrm{ss}} \right),
\end{equation*}
\begin{equation*}
H_{\mathcal D_2}^{\mathrm{ss}} = \begin{bmatrix} \frac{1}{2}H_{22}^{\mathrm{ss}} & H_{23}^{\mathrm{ss}} \\
H_{32}^{\mathrm{ss}} & \frac{1}{2}H_{33}^{\mathrm{ss}} \end{bmatrix}, \qquad h_{\mathcal D_2}^{\mathrm{ss}} = \frac{1}{2} \col\left( h_2^{\mathrm{ss}}, h_3^{\mathrm{ss}} \right),
\end{equation*}
and
\begin{equation*}
H_{\mathcal D_3}^{\mathrm{ss}} = \begin{bmatrix} \frac{1}{2}H_{11}^{\mathrm{ss}} & H_{13}^{\mathrm{ss}} \\
H_{31}^{\mathrm{ss}} & \frac{1}{2}H_{33}^{\mathrm{ss}} \end{bmatrix}, \qquad h_{\mathcal D_3}^{\mathrm{ss}} = \frac{1}{2} \col\left( h_1^{\mathrm{ss}}, h_3^{\mathrm{ss}} \right).
\end{equation*}

For $\alpha\in\{1,2,3\}$, define
\begin{equation*}
\Psi_{\mathcal D_\alpha}(z_{\mathcal D_\alpha}) = \frac{1}{2} z_{\mathcal D_\alpha}^\top H_{\mathcal D_\alpha}^{\mathrm{ss}} z_{\mathcal D_\alpha} + \left( h_{\mathcal D_\alpha}^{\mathrm{ss}} \right)^\top z_{\mathcal D_\alpha}.
\end{equation*}
The centralized objective is then reconstructed exactly as
\begin{align*}
\Phi(z) = \Psi_{\mathcal D_1}\left(\col(z_1,z_2)\right) &+ \Psi_{\mathcal D_2}\left(\col(z_2,z_3)\right) \\
&+ \Psi_{\mathcal D_3}\left(\col(z_1,z_3)\right) + \kappa_{\mathrm{ss}}.
\end{align*}
Solving the three local quadratic problems with consensus imposed on the duplicated block variables therefore recovers the unique centralized optimizer $z^\star$. The corresponding steady-state input is
\begin{equation*}
s^\star = -\widetilde A z^\star-d_z.
\end{equation*}
Finally, the affine offsets associated with the two structured gains are
\begin{equation*}
k_s^{\mathrm{diss}} = d_z+ \left( \widetilde A-K_s^{\mathrm{diss}} \right)z^\star, \quad
k_s^{\mathcal H_2} = d_z+ \left( \widetilde A-K_s^{\mathcal H_2} \right)z^\star.
\end{equation*}
Both structured controllers therefore converge to the same optimal steady-state pair $(z^\star,s^\star)$, while differing in their transient gains.

\subsection{Closed-loop simulations and performance comparison}
\label{subsec:numerical_results}

We now compare the closed-loop behavior of the synthesized controllers. The simulations include the uncontrolled opinion dynamics, the centralized Riccati controller, the structured dissipativity-based controller, and the structured $\mathcal H_2$-based controller. The two structured synthesis problems are solved via the standard consensus-ADMM scheme in \citep{Boyd2011Distributed}. 

The corresponding opinion trajectories are reported in Fig.~\ref{fig:traj}. Thin curves represent the trajectories of the individual agents, while the thicker curves show the average opinion within each community. The uncontrolled trajectories converge according to the combined effect of social interaction, topic coupling, and anchoring. The controlled trajectories converge to the common optimal steady state, although their transient responses differ because of the different feedback gains. Figure~\ref{fig:opinion-range} shows the distribution of the opinions within each community at the final simulation time, highlighting how the controllers affect the residual dispersion of the individual opinions within the same community.

\begin{figure}[t]
    \centering
    \includegraphics[width=\linewidth,clip]{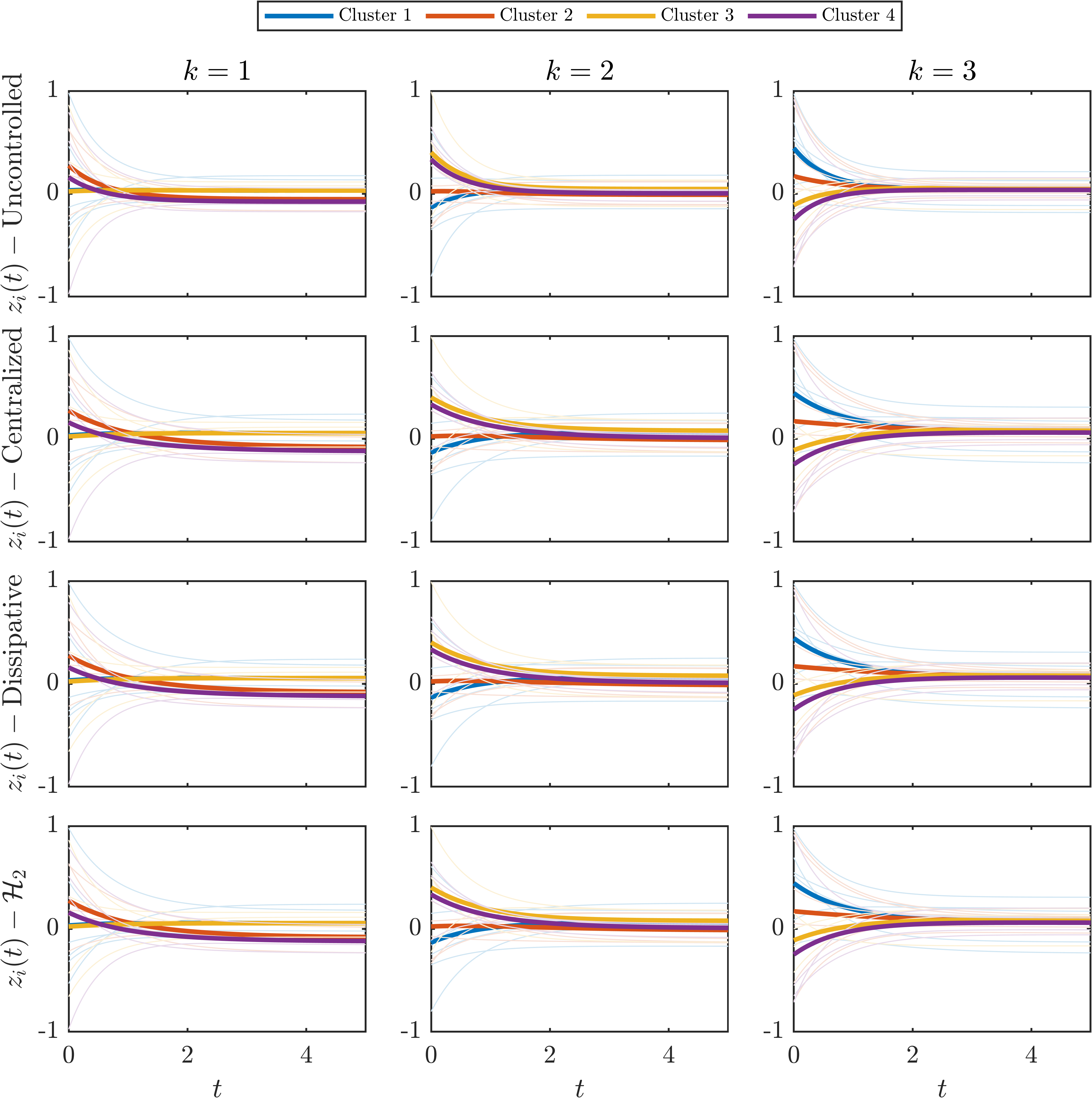}
    \vskip -5pt
    \caption{Opinion trajectories under the uncontrolled dynamics, the centralized Riccati controller, the structured dissipativity-based controller, and the structured $\mathcal H_2$ controller for the topics $k\in[3]$. Thin lines represent individual agents, while thick lines represent community averages.}
    \label{fig:traj}
\end{figure}

\begin{figure}[t]
    \centering
    \includegraphics[width=\linewidth,clip]{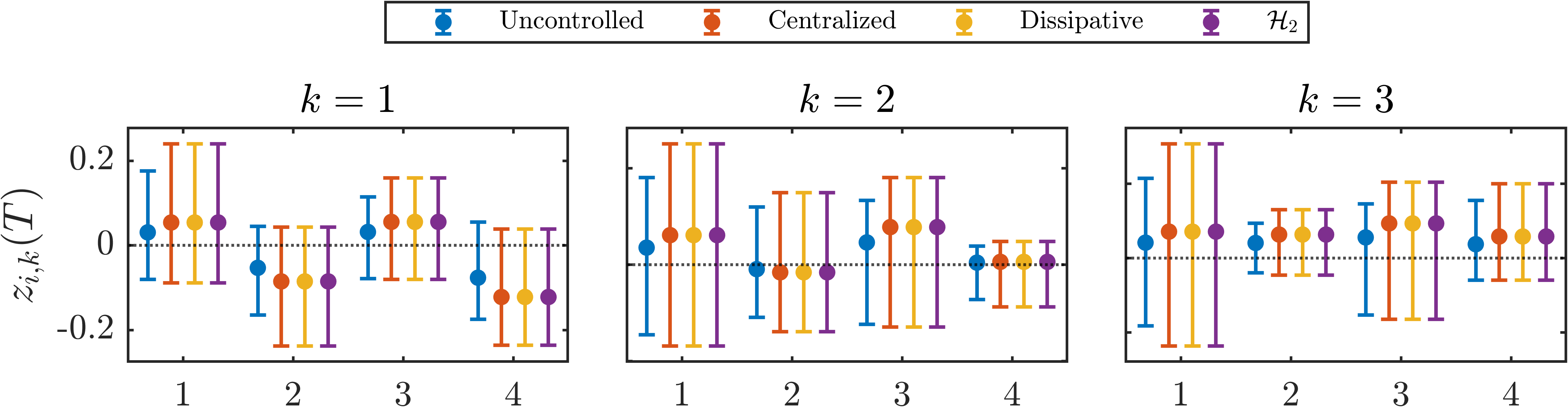}
    \vskip -5pt
    \caption{Range of the opinions within each community at the final simulation time.}
    \label{fig:opinion-range}
\end{figure}

The transient performance is compared in Fig.~\ref{fig:cost-ratio}. The figure reports the cumulative value of the quadratic density $\tilde z^\top\widetilde Q\tilde z+\tilde s^\top R\tilde s$, normalized by the corresponding value obtained with the centralized Riccati controller. Since the plotted quantities are finite-horizon cumulative costs, the ratio is not required to remain above one at every intermediate time. The Riccati controller minimizes the complete infinite-horizon cost, rather than any partial finite-horizon integral. The final part of the simulation therefore provides the more meaningful comparison once the transients have sufficiently decayed.

\begin{figure}[t]
    \centering
    \includegraphics[width=\linewidth,clip]{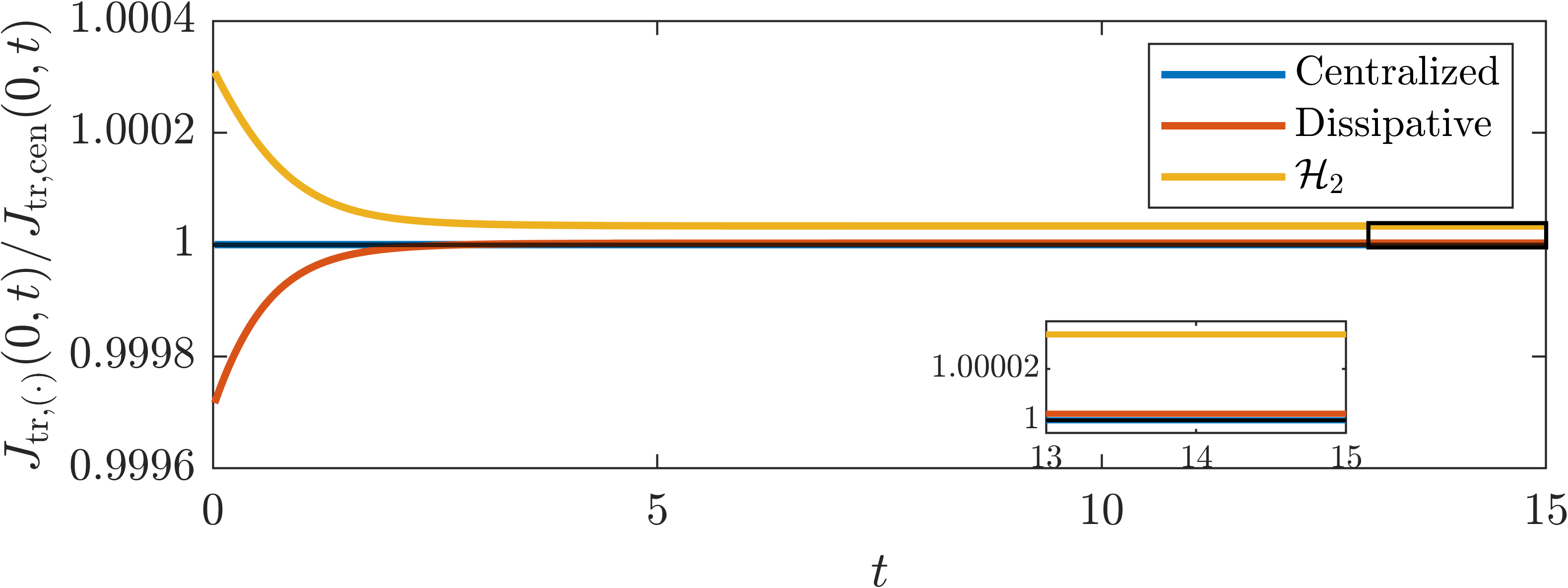}
    \vskip -5pt
    \caption{Finite-horizon cumulative quadratic energy normalized by the corresponding value obtained with the centralized Riccati controller.}
    \label{fig:cost-ratio}
\end{figure}

Figure~\ref{fig:cost-components} separates the cumulative physical performance index into its individual contributions. These include the engagement reward, polarization penalty, deviation penalty, recommendation effort, graph-regularization term, and recommendation mismatch. The comparison shows how the different feedback gains redistribute the overall performance among the individual terms. 

\begin{figure}[t]
    \centering
    \includegraphics[width=\linewidth,clip]{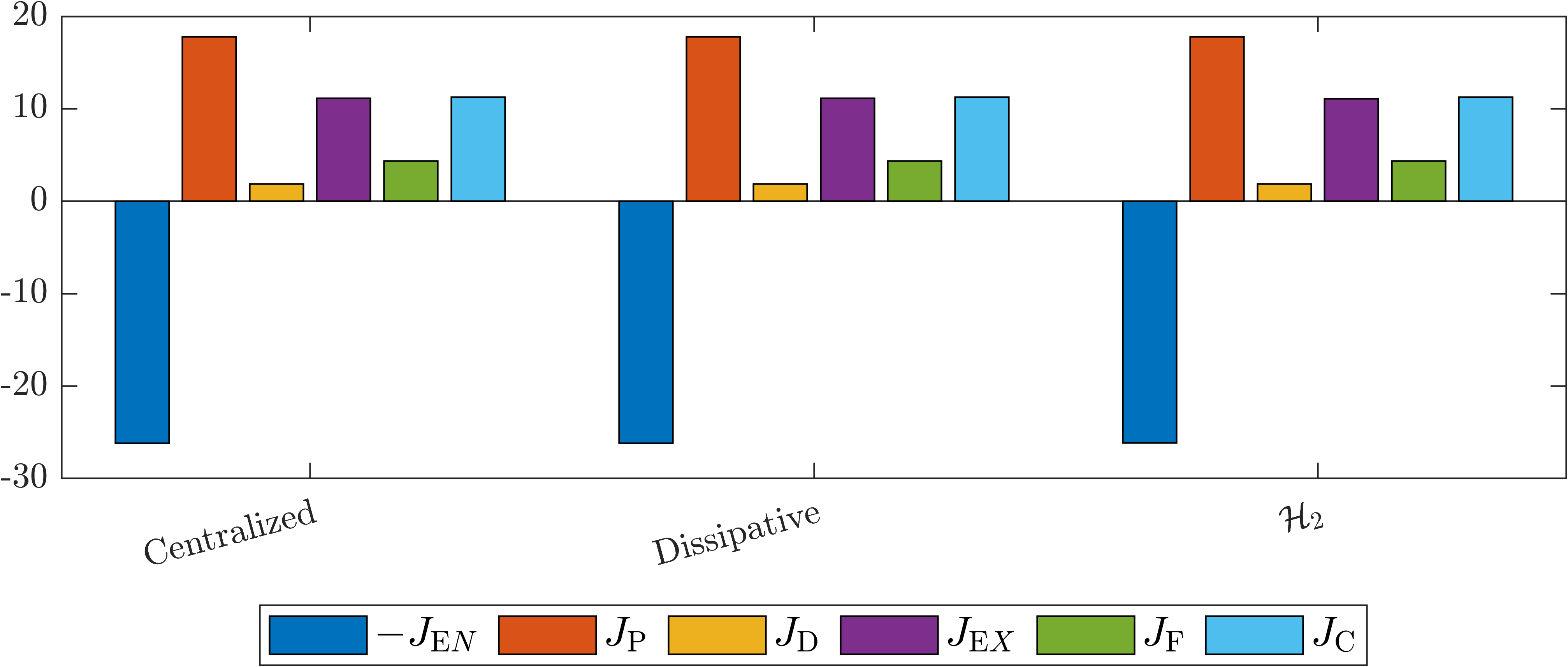}
    \vskip -5pt
    \caption{Cumulative contributions to the cost.}
    \label{fig:cost-components}
\end{figure}

The agent-wise sparsity patterns of the feedback gains are reported in Fig.~\ref{fig:sparsity-patterns}. A dot at position $(i,j)$ means that the recommendation applied to agent $i$ uses at least one opinion coordinate of agent $j$. Each displayed entry therefore summarizes the complete $m\times m$ topic-level block associated with the pair of agents. The centralized Riccati controller uses information from all the network, whereas the two structured controllers retain only the dependencies allowed by the prescribed information pattern. Their binary patterns are the same because the dissipativity and $\mathcal H_2$ designs use the same sparsity mask, although the numerical values of their nonzero gain blocks are generally different.

\begin{figure}[t]
    \centering
    \begin{subfigure}[t]{0.45\linewidth}
        \centering
        \includegraphics[width=\linewidth,clip]{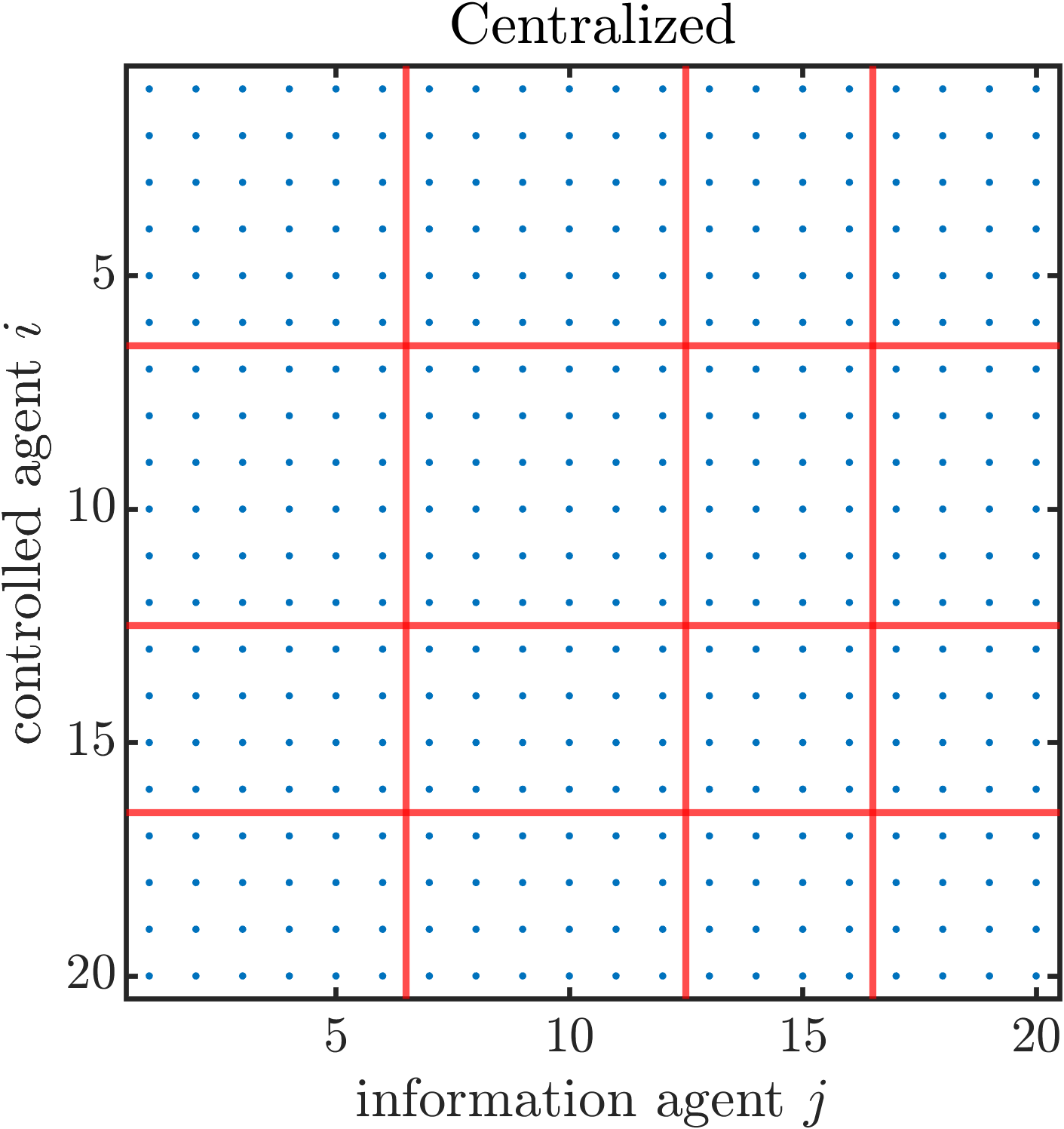}
        \caption{Centralized gain.}
        \label{fig:sparsity-centralized}
    \end{subfigure}
    \hfill
    \begin{subfigure}[t]{0.45\linewidth}
        \centering
        \includegraphics[width=\linewidth,clip]{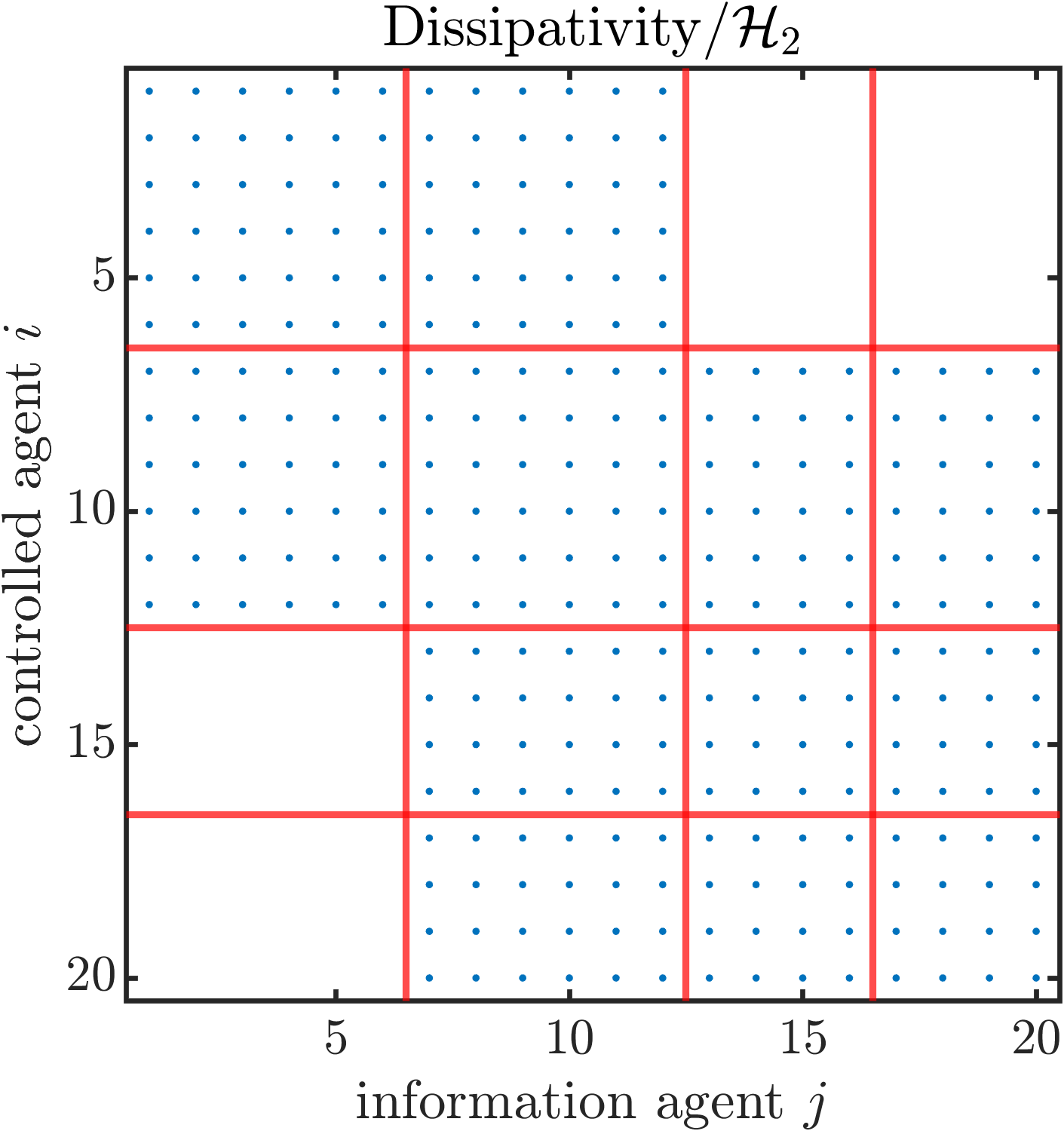}
        \caption{Dissipativity/$\mathcal H_2$ gains.}
        \label{fig:sparsity-structured}
    \end{subfigure}
    \vskip -5pt
    \caption{Agent-wise sparsity patterns of the synthesized centralized (left) and dissipativity/ $\mathcal H_2$-based (right) feedback gains. A dot at $(i,j)$ indicates that at least one opinion coordinate of agent $j$ is used in the recommendation applied to agent $i$.}
    \label{fig:sparsity-patterns}
     \vskip -3pt
\end{figure}

% \begin{figure}[t]
%     \centering
%     \includegraphics[width=\linewidth,clip]{figs/Fig_6.png}
%     \vskip -5pt
%     \caption{Agent-wise sparsity patterns of the synthesized feedback gains. A dot at $(i,j)$ indicates that at least one opinion coordinate of agent $j$ is used in the recommendation applied to agent $i$.}
%     \label{fig:sparsity-patterns}
% \end{figure}

Finally, Fig.~\ref{fig:polarization-sweep} illustrates the effect of varying the polarization weight $w_{\mathrm P}$. Increasing this weight modifies the balance between polarization reduction, engagement, deviation from the uncontrolled equilibrium, recommendation mismatch, recommendation effort, and graph regularization.

\begin{figure}[t]
    \centering
    \includegraphics[width=\linewidth,clip]{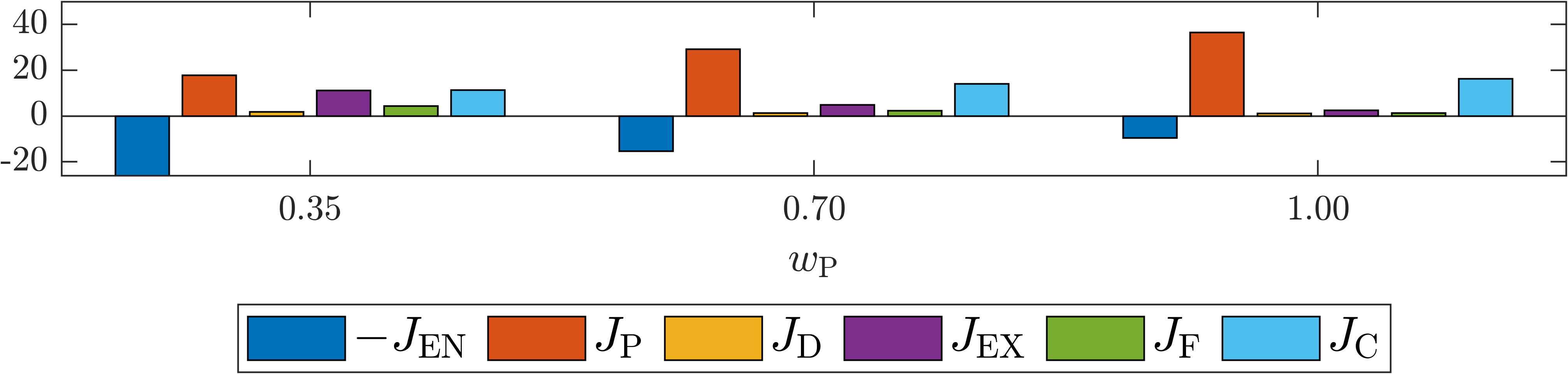}
    \vskip -5pt
    \caption{Cumulative contributions to the cost for different values of the polarization weight $w_{\mathrm P}$.}
    \label{fig:polarization-sweep}
       \vskip -3pt
\end{figure}
\section{Discussion and conclusion}
\label{sec:disc}

This paper considered the synthesis of closed-loop recommendation policies for networked multi-topic opinion dynamics. The recommendation mechanism was formulated as an infinite-horizon optimal control problem in which recommendation--opinion alignment is rewarded as a proxy for engagement, while polarization, deviation from the uncontrolled equilibrium, recommendation effort, graph-regularization term, and recommendation mismatch are penalized. An agent-wise representation was adopted, so that each agent is described as a local multi-topic subsystem and the social graph appears as a diffusive interconnection among the subsystems.

Within the strictly positive-definite regime, the affine optimal-control problem admits a clear two-layer interpretation. The optimal steady-state pair $(z^\star,s^\star)$ is first obtained from a strictly convex static quadratic problem. After shifting the state and input around this pair, the remaining transient problem reduces to a standard strict LQR. This yields the centralized Riccati controller, which provides the nominal performance benchmark.

Since the Riccati gain is generally dense, we also considered structured affine feedback laws satisfying a prescribed information pattern. In the general formulation, both the static gain $K_s$ and the affine offset $k_s$ are design variables, and each pair induces its own closed-loop equilibrium. In the proposed sequential design, we first synthesize $K_s$ through surrogate optimal control formulations expressed through dissipativity-based or $\mathcal H_2$-based LMIs and then compute the optimal steady-state pair $(z^\star,s^\star)$ used by the centralized controller. The offset is subsequently recovered as $k_s^\star=d_z+(\widetilde A-K_s)z^\star$. Selecting a common steady state provides a natural basis for comparing the controllers, but it remains a design choice that may be adapted to the information available for the local decomposition or to other implementation requirements.

To improve scalability, the centralized structured LMIs were further represented through weighted local contributions defined over (possibly overlapping) certificate clusters. The lifted sum of the local matrices reconstructs the corresponding centralized LMI exactly, while negativity of every local contribution provides a sufficient condition for negativity of the centralized matrix. The choice of the certificate cluster family and of \green{the certificate weights} may therefore affect feasibility and conservatism.

A separate decomposition was introduced for the steady-state problem. In this case, the local quadratic objectives reconstruct the centralized static objective exactly, provided that the diagonal, linear, and off-diagonal contributions satisfy the partition-of-unity conditions and consensus is enforced on shared variables. The steady-state weights do not change the resulting optimizer, although they may influence numerical conditioning and the convergence of a distributed implementation. The steady-state sparsity graph may also be denser than the base-block interaction graph because the term $\widetilde A^\top R\widetilde A$ can generate couplings between blocks that influence a common dynamic equation. The numerical example illustrates the resulting design procedure on a social network with community structure. 

Several limitations remain \green{and motivate further research in this area. The first limitation is that} the present analysis assumes full-state information and a known, time-invariant model specified by the social Laplacian, the topic-coupling matrix, and the anchoring parameters. In practical recommendation systems, these quantities must be inferred from data and may evolve over time based also on external, possibly unpredictable, events. Extensions to partial observations, state estimation, output feedback, uncertain or time-varying networks, and data-driven model identification thus constitute relevant directions for future work.

\green{The second limitation is that} the formulation uses continuous time linear dynamics, quadratic performance indices, static state feedback, and unconstrained recommendation inputs. Incorporating input saturation, finite recommendation sets, quantization, constraints on the support of the opinions, explicit diversity objectives, and nonlinear or stochastic user responses would lead to more realistic models, but would also require constrained, robust, hybrid, or nonlinear control techniques. Further open problems are the systematic selection of the base block partition, the certificate cluster cover, and the associated weights so as to balance computational complexity and certificate conservatism.

\bibliography{Bib_fin}

\end{document}